\newtheorem{theorem}{Theorem}[section]
\theoremstyle{plain}
\newtheorem{definition}[theorem]{Definition}
\newtheorem{prop}[theorem]{Proposition}
\newtheorem{lemma}[theorem]{Lemma}
\newtheorem{thm}[theorem]{Theorem}
\theoremstyle{definition}
\theoremstyle{remark}
\newtheorem{remark}[theorem]{Remark}
\newcommand{\ocs}{\overline{\mathcal{O}}_{\widetilde{S},\Lam}}
\newcommand{\oca}{\overline{\mathcal{O}}_{A,\Gamma}}
\newcommand{\OA}{\mathcal{O}_{A}}
\newcommand{\OC}{\mathcal{O}}
\newcommand{\W}{\mathcal{W}}
\newcommand{\wW}{\widehat{\mathcal{W}}}
\newcommand{\Z}{\mathbb{Z}}
\newcommand{\C}{\mathbb{C}}
\newcommand{\K}{\mathbb{K}}
\newcommand{\wb}{\widehat{\mathfrak{b}}}
\newcommand{\wh}{\widehat{\mathfrak{h}}}
\newcommand{\wg}{\widehat{\mathfrak{g}}}
\newcommand{\g}{\mathfrak{g}}
\newcommand{\lb}{\mathfrak{b}}
\newcommand{\h}{\mathfrak{h}}
\newcommand{\p}{\mathfrak{p}}
\newcommand{\J}{\mathcal{J}}
\newcommand{\st}{\widetilde{S}}
\newcommand{\wR}{\widehat{R}}
\newcommand{\ww}{\widehat{\W}}
\newcommand{\hc}{\widehat{\mathfrak{h}}_{\mathrm{crit}}^*}
\newcommand{\al}{{\alpha}}
\newcommand{\lam}{{\lambda}}
\newcommand{\Lam}{{\Lambda}}
\newcommand{\del}{{\Delta}}
\newcommand{\rdel}{{\overline{\Delta}}}
\newcommand{\cent}{\mathcal{Z}_{\widetilde{S}}(\Lam, \J)}
\newcommand{\cen}{\mathcal{Z}_{\widetilde{S}}(\Lam)}
\newcommand{\centa}{\mathcal{Z}_{A}(\Gamma, \J)}
\newcommand{\rp}{{\overline{P}}}
\newcommand{\Hom}{\mathrm{Hom}}
\newcommand{\End}{\mathrm{End}}
\numberwithin{equation}{section}
\begin{document}

\title[Centers for restricted category $\OC$]{Centers for the restricted category $\OC$ at the critical level over affine Kac-Moody algebras}

\author{Johannes K\"ubel*}
\address{Department of Mathematics, University of Erlangen, Germany}
\curraddr{Cauerstr. 11, 91058 Erlangen, Germany}
\email{kuebel@mi.uni-erlangen.de}
\thanks{*supported by the DFG priority program 1388}


\date{\today}

\keywords{critical representations of affine Kac-Moody algebras, category $\OC$}

\begin{abstract}
The restricted category $\OC$ at the critical level over an affine Kac-Moody algebra is a certain subcategory of the ordinary BGG-category $\OC$. We study a deformed version introduced by Arakawa and Fiebig and calculate the center of the deformed restricted category $\OC$.
\end{abstract}

\maketitle

\section{Introduction}  

The first step of Fiebig's proof of the Kazhdan-Lusztig conjecture for symmetrizable Kac-Moody algebras outside the critical hyperplanes was to calculate the center of a non-critical block of the deformed category $\OC$ (cf. \cite{4}). It turned out that it is isomorphic to the structure algebra of the moment graph associated to this block. Since the moment graph picture gives a description of the equivariant cohomology of the flag manifold corresponding to the Langlands dual Lie algebra, one gets the link to geometry to prove the Kazhdan-Lusztig conjecture.\\
In the present paper we use analogues arguments as in \cite{4} to describe centers of an appropriate categorical framework for the critical level representations over an affine Kac-Moody algebra. For this, let $\g\supset \lb \supset \h$ be a simple Lie algebra with a Borel and a Cartan subalgebra and $\wg$ be the affine Kac-Moody algebra associated to $\g$. Denote by $\wh$ the Cartan subalgebra corresponding to $\h$. The Feigin-Frenkel center acts on the category $\OC$ over $\wg$ as a graded algebra $Z=\bigoplus_{n\in \Z}Z_n$ generated by infinitely many homogeneous elements in the following way. Let $z \in Z_n$ be a homogeneous element of degree $n$. Then $z$ acts on $\OC$ as a transformation from the functor $(\cdot \otimes L(\delta))^n$ to the identity functor, where $L(\delta)$ is the one dimensional simple $\wg$-module with highest weight the smallest positive imaginary root. If $n \neq 0$, this action is only non-trivial on the subcategory $\OC_c \subset \OC$ of modules with critical level. In \cite{3} the authors introduce the restricted category $\overline{\OC}_c$ which consists of those modules $M \in \OC_c$ on which $Z_n$ acts trivially for all $n \in \Z \backslash \{0\}$. Furthermore, they describe the categorical structure of the subgeneric blocks of $\overline{\OC}_c$.\\
Now let $\Lam \subset \wh^*$ be a subset that parameterises the highest weights of simple modules in a block of $\OC_c$. Denote this block by $\overline{\OC}_\Lam$ and let $R(\Lam)$ be the subset of roots corresponding to $(\g,\h)$ which are integral with respect to $\Lam$. In this paper we work with a relative version of $\overline{\OC}_\Lam$. For this, let $\st$ be the localization of the symmetric algebra over $\h$. In chapter 2, we define the deformed block $\ocs$ as a relative version of $\overline{\OC}_\Lam$. We use the main results of \cite{3} to get the following:
The center of the category $\ocs$ is isomorphic to the $\st$-module
$$\left \{(z_\mu)_{\mu \in \Lam} \in \prod_{\substack{\mu \in \Lam}} \st \middle | z_\mu \equiv z_{\al \downarrow \mu} \, (\mathrm{mod} \, \al^\vee) \, \forall \al \in R(\Lam) \right \}$$
where $\al\downarrow \cdot: \Lam \rightarrow \Lam$ is a certain bijection on the set $\Lam$, which we define in chapter \ref{gsub}.\\
In this sense, this paper should be seen as an addendum to the first part of Fiebig's article \cite{4}. But unfortunately our result does not yet have a geometric counterpart as the non-critical situation of \cite{4} does.

\section{Preliminaries}

\subsection{Affine Kac-Moody algebras}

Let $\g \supset \lb \supset \h$ be a finite dimensional, complex, simple Lie algebra with a Borel and a Cartan subalgebra and denote by $\mathcal{L}(\g)= \g \otimes_\C \C[t,t^{-1}]$ the loop algebra. The Killing form $\kappa: \g \times \g \rightarrow \C$ allows us to take a central extension $\widetilde{\g}=\mathcal{L}(\g) \oplus \C K$ of the loop algebra, where $K$ is a central element. 
Adding an outer derivation operator $D=t \frac{\partial}{\partial t}$ to $\widetilde{\g}$ we get the affine Kac-Moody algebra $\wg$ corresponding to $\g$. As a vector space we have $\wg = (\g \otimes_\C \C[t,t^{-1}]) \oplus \C K \oplus \C D$ and the Lie bracket is given by:
$$[x \otimes t^n, y \otimes t^m]=[x,y] + n \delta_{m,-n} \kappa(x,y) K$$
$$[K,\wg]= \{0\}$$
$$[D,x \otimes t^n]=nx \otimes  t^n$$
where $x,y \in \g$. The corresponding affine Cartan subalgebra is given by $\wh =\h \oplus  \C K \oplus \C D$ and the affine Borel subalgebra by $\wb = (\g \otimes t \C[t] + \lb\otimes_\C \C[t]) \oplus \C K \oplus \C D$.\\
Denote by $R\supset R^+$ the root system with positive roots corresponding to the triple $(\g, \lb , \h)$. The projection $\wh =\h \oplus  \C K \oplus \C D\twoheadrightarrow \h$ induces an embedding $\h ^* \hookrightarrow \wh^*$. Let $\delta \in \wh^*$ be the imaginary root defined by $\delta(\h \oplus \C K) =\{0\}$ and $\delta(D) =1$. The inclusion $\h ^* \hookrightarrow \wh^*$ yields an embedding of $R$ into the affine root system of $(\wg,\wh)$ which we denote by $\wR$. It is given by $\wR=\wR^\mathrm{re}\cup \wR^\mathrm{im}$ where
$$\widehat{R}^{\mathrm{re}}= \{\al +n \delta\,|\,\al \in R\subset \widehat{R}, \, n \in \Z \}$$
$$\widehat{R}^{\mathrm{im}} =\{n \delta \,|\, n \in \Z, n \neq 0\}$$
Let $\theta \in R^+$ be the highest root and denote by $\Pi \subset R^+$ the set of simple roots. The set of affine simple roots $\widehat{\Pi}\subset \wR$ is then given by $\widehat{\Pi} = \Pi \cup \{-\theta +\delta\}$ and the set of positive affine roots by $\wR^+=R^+ \cup \{\al + n \delta \,|\, \al \in R, n>0\} \cup \{n \delta|n>0\}$.\\
We denote by $\W\subset \mathrm{Gl}(\h^*)$ the \textit{Weyl group} of the finite dimensional Lie algebra $\g$ and by $\widehat{\mathcal{W}} \subset \mathrm{Gl}(\wh^*)$ the \textit{affine Weyl group} of $\wg$. For any real root $\al \in \wR^\mathrm{re}$ the root space $\wg_\al$ is one dimensional and we can uniquely define the \textit{coroot} $\al^\vee \in \wh$ by the properties $\al ^\vee \in [\wg_\al, \wg_{-\al}]$ and $\langle \al,\al ^\vee\rangle=2$, where $\langle\cdot,\cdot\rangle:\wh^*\times\wh \rightarrow \C$ is the natural pairing. Let $\rho \in \wh^*$ be an element with $\rho(\al^\vee)=1$ for all coroots $\al^\vee$ associated to a simple affine root $\al \in \widehat{\Pi}$. The $\rho$-shifted dot-action of $\wW$ on $\wh^*$ is now given by 
$$w \cdot \lam =w(\lam + \rho)-\rho$$
where $w \in \widehat{\mathcal{W}}$ and $\lam \in\wh^*$. Although $\rho$ is not uniquely defined, the dot-action is independent of the choice of $\rho$.

\subsection{The deformed Category $\OC$}

Denote by $S=S(\h)$ the symmetric algebra over the complex vector space $\h$ and by $\widehat{S}=S(\wh)$ the symmetric algebra over $\wh$. Every $S$-algebra $A$ with structure map $\tau: S\rightarrow A$ carries an $\widehat{S}$-structure by composing $\tau$ with the map $\widehat{S} \twoheadrightarrow S$ induced by $\h^*\subset \wh^*$ from above. We call a commutative, associative, noetherian and unital $S$-algebra $A$ a \textit{deformation algebra}.\\
For a Lie algebra $\mathfrak{a}$ and a deformation algebra $A$ we define $\mathfrak{a}_A:= \mathfrak{a} \otimes_\C A$.
We can compose the structure morphism $\tau : S \rightarrow A$ to get a map $\wh \hookrightarrow S(\wh) \rightarrow S(\h) \rightarrow A$. We can extend this map to an $A$-linear map $\tau:\wh_A \rightarrow A$ which we denote by $\tau$ again and call it the \textit{canonical weight} corresponding to $A$. We can identify the $A$-dual space of $\wh_A$ with $\wh^*\otimes _\C A$ and use this identification to view every weight $\lam \in \wh^*$ as an element in $\wh_A^*$. This implies $\tau(D\otimes 1)=\tau(K\otimes 1)=0$.\\
For a $\wg_A$-module $M$ and a weight $\lam\in \wh^*$ we define the \textit{deformed} $\lam$-\textit{weight space}
$$M_\lam := \{v \in M \,|\, Hv = (\lam(H)+\tau(H))v \, \forall H \in \wh \}$$
We call $M$ a \textit{weight module} if it decomposes into the direct sum of its deformed weight spaces. 

\begin{definition}
The deformed category $\OA$ is the full subcategory of $\wg_A\mathrm{-mod}$ which consists of all modules $M$ which are
\begin{enumerate}
\item weight modules, and 
\item locally $\wb_A$-finite, i.e., for any $m \in M$ the submodule $\lb_A m$ is finitely generated over $A$.
\end{enumerate}
\end{definition}

The \textit{deformed Verma module} with highest weight $\lam \in \wh^*$ is an element of $\OA$ and is defined by
$$\Delta_A(\lam):= U(\wg_A) \otimes_{U(\wb_A)} A_\lam$$
where $A_\lam$ is $A$ as an $A$-module and the left action of $\wb$ is given by the composition $\wb \twoheadrightarrow \wh \stackrel{\lam+\tau}{\rightarrow} A$.\\
For a homomorphism $A\rightarrow A'$ of $S$-algebras we get a functor
$$\cdot \otimes_A A' : \OA \longrightarrow \OC_{A'}$$
which maps $\del_A(\lam)$ to $\del_{A'}(\lam)$.
\begin{lemma}[\cite{4}, Proposition 2.1]
Let $A$ be a local deformation algebra with residue field $\K$. Then the base change functor $\cdot \otimes_A \K$ induces a bijection between isomorpism classes of simple objects in $\OA$ and the ones in $\OC_\K$.
\end{lemma}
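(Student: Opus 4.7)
The plan is to construct an explicit simple object $L_A(\lam) \in \OA$ for each $\lam \in \wh^*$ as a quotient of the deformed Verma module $\del_A(\lam)$, then verify that $\lam \mapsto L_A(\lam)$ is a section of the base change functor and that every simple in $\OA$ arises this way.

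First I would show that $\del_A(\lam)$ admits a unique maximal proper submodule. Let $v_\lam \in \del_A(\lam)$ be the canonical generator; the weight space $\del_A(\lam)_\lam$ is the free rank-one $A$-module $A v_\lam$. For any submodule $N \subseteq \del_A(\lam)$ we have $N_\lam = I v_\lam$ for some ideal $I \subseteq A$; if $I = A$ then $v_\lam \in N$ so $N = \del_A(\lam)$. Thus every proper $N$ has $N_\lam \subseteq I v_\lam$ with $I$ proper, and since $A$ is local $I \subseteq \mathfrak{m}$. Consequently the sum $N^{\max}$ of all proper submodules satisfies $N^{\max}_\lam \subseteq \mathfrak{m} v_\lam$ and is still proper. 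Setting $L_A(\lam) := \del_A(\lam)/N^{\max}$ gives a simple object in $\OA$ whose weight $\lam$ space is the free rank-one $A$-module generated by the image of $v_\lam$.

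Next I would verify the compatibility with base change. Using the isomorphism $\del_A(\lam) \otimes_A \K \cong \del_\K(\lam)$, the quotient $L_A(\lam) \otimes_A \K$ is a nonzero quotient of $\del_\K(\lam)$, and by the preceding paragraph the weight $\lam$ space of the image of $N^{\max}$ in $\del_\K(\lam)$ vanishes, so this image is a proper submodule of $\del_\K(\lam)$ and hence contained in the unique maximal one. Conversely, the preimage under $\del_A(\lam) \twoheadrightarrow \del_\K(\lam)$ of the maximal submodule of $\del_\K(\lam)$ is proper and so contained in $N^{\max}$. Combining these inclusions yields $L_A(\lam) \otimes_A \K \cong L_\K(\lam)$. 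Since $L_\K(\lam) \not\cong L_\K(\mu)$ for $\lam \neq \mu$ (comparing highest weights), this also shows $L_A(\lam) \not\cong L_A(\mu)$.

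Finally, and this is the step I expect to require the most care, I would show every simple $L \in \OA$ is isomorphic to some $L_A(\lam)$. Pick any nonzero weight vector $v \in L$; local $\wb_A$-finiteness forces $\wb_A v$ to be a finitely generated $A$-module which decomposes into weight spaces, so it has only finitely many weights and in particular admits a maximal one, say $\lam$. Any nonzero $v_0 \in (\wb_A v)_\lam$ satisfies $\wg_\al v_0 \subseteq (\wb_A v)_{\lam+\al} = 0$ for every $\al \in \wR^+$ by maximality, so $v_0$ is a highest weight vector in the deformed sense. The universal property of $\del_A(\lam)$ then yields a nonzero homomorphism $\del_A(\lam) \to L$, which is surjective by simplicity of $L$; its kernel is a maximal proper submodule, hence equals $N^{\max}$, giving $L \cong L_A(\lam)$. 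The main obstacle is justifying that the weights of $\wb_A v$ really have a maximal element and that the resulting $v_0$ is annihilated by all positive root spaces of $\wg$ (not only those in $\wb$); this uses both the weight decomposition and the fact that any weight $\lam + \al$ with $\al \in \wR^+$ must already appear in $\wb_A v$ once it occurs in $L$, combined with the $A$-finite generation of $\wb_A v$.
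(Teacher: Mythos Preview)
The paper does not prove this lemma; it simply cites \cite{4}, Proposition~2.1. Your argument is the standard one and is essentially correct, so there is nothing substantive to compare.

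Two small corrections. First, your claim that the $\lam$-weight space of $L_A(\lam)$ is ``the free rank-one $A$-module generated by the image of $v_\lam$'' is false: since $\mathfrak{m}\,\del_A(\lam)$ is a proper $\wg_A$-submodule, it lies in $N^{\max}$, so in fact $N^{\max}_\lam = \mathfrak{m} v_\lam$ and $L_A(\lam)_\lam \cong A/\mathfrak{m} = \K$. Fortunately you never use freeness downstream; the only fact you need in the base-change step is $N^{\max}_\lam \subseteq \mathfrak{m} v_\lam$, which you did establish. Second, your closing worry that $v_0$ might fail to be killed by positive root spaces of $\wg$ ``not in $\wb$'' is empty: by the definition $\wb = (\g \otimes t\,\C[t] + \lb \otimes \C[t]) \oplus \C K \oplus \C D$, every $\wg_\al$ with $\al \in \wR^+$ is contained in $\wb$, so once $\lam$ is maximal among the (finitely many) weights of $\wb_A v$ the argument goes through directly.
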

This lemma implies that the simple modules of $\OA$ and $\OC_\K$ are parameterized by $\wh^*$ (cf. \cite{3}, 2.8). We denote the simple module in $\OA$ ($\OC_\K$, resp.) with highest weight $\lam \in \wh^*$ by $L_A(\lam)$ ($L_\K(\lam)$, resp.).\\
In contrast to the category $\OC$ over the simple Lie algebra $\g$ we do not have projective covers of simple objects in the category $\OC$ over $\wg$ in general. However, in certain truncated subcategories projective covers do exist.

\begin{definition}
\begin{enumerate}
\item A subset $\J \subset \wh^*$ is called open, if for all $\lam \in \J$ and $\mu \in \wh^*$ with $\mu \leq \lam$ we have $\mu \in \J$.
\item We call a subset $\J \subset \wh^*$ \textit{bounded from above} if for any $\lam \in \J$ the set $\J \cap \{\geq \lam \} = \{\mu \in \J \,|\, \mu \geq \lam \}$ is finite.
\end{enumerate}
\end{definition} 

\begin{definition}
Let $\J \subset \wh^*$ be open and $M \in \OA$.
\begin{enumerate}
\item We define $M^\J := M/(U(\wg_A) \bigoplus_{\lam \in \wh^* \backslash \mathcal{J}} M_\lam$).
\item We define $\OA^\J \subset \OA$ to be the full subcategory of objects $M$ with $M= M^\J$.
\end{enumerate}
\end{definition}
It is easy to see that $M \mapsto M^\J$ induces a functor $\OA \rightarrow \OA^\J$ which is left adjoint to the inclusion functor $\OA^\J \subset \OA$. Furthermore, $\Delta_A(\lam)$ and $L_A(\lam)$ are contained in $\OA^\J$ if and only if $\lam \in \J$.

\begin{thm}[\cite{3}, Theorem 2.8]\label{thmpr}
Let $\J$ be a bounded, open subset of $\wh^*$ and $A$ a local deformation algebra with residue field $\K$.
\begin{enumerate}
\item For each $\lam \in \J$ there exists a projective cover $P_A^\J(\lam)$ of $L_A(\lam)$ in $\OA^\J$. It admits a Verma flag, i.e., a filtration with layers isomorphic to deformed Verma modules, and we have
$$(P_A^\J(\lam): \Delta_A(\mu)) =\begin{cases} [\Delta_\K (\mu):L_\K(\lam)], \, \mathrm{if} \, \mu \in \J, \\ 0, \,\,\,\,\,\,\,\,\,\,\mathrm{otherwise} \end{cases}$$
where the left side denotes the multiplicity of $\Delta_A(\mu)$ in a Verma flag and the right side the multiplicity of $L_\K(\lam)$ as a simple subquotient of $\Delta_\K(\mu)$.
\item If $\J' \subset \J$ is open as well, then
$$P_A^\J(\lam)^{\J'} \cong P_A^{\J'}(\lam).$$
\item If $A \rightarrow A'$ is a homomorphism of local deformation algebras and $P \in \OA^\J$ is projective, then $P \otimes_A A'$ is projective in $\OC_{A'}^\J$.
\item We have $P_A^\J(\lam) \otimes_A \K \cong P_\K^\J(\lam)$.
\item Let $P$ be a finitely generated projective object in $\OA^\J$ and $A \rightarrow A'$ be a homomorphism of local deformation algebras. For any $M \in \OA^\J$ the natural map
$$\Hom_{\OA}(P,M) \otimes_A A' \longrightarrow \Hom_{\OC_{A'}}(P \otimes_A A', M \otimes_A A')$$
is an isomorphism.
\end{enumerate}
\end{thm}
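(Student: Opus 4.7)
The plan is to prove the five claims in the order listed, because existence of projective covers (1) is by far the hardest and everything else is a functoriality consequence. The essential point is that boundedness of $\J$ turns the category $\OA^\J$ into something with ``enough projectives'', even though $\OA$ itself typically does not have projective covers. I would also always keep an eye on the deformation aspect: $A$ is only assumed to be a local $S$-algebra, not a field, so projectivity and indecomposability have to be argued carefully at the level of $A$-modules.

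For (1), I would fix $\lam\in\J$ and note that $\J_{\geq\lam}:=\{\mu\in\J\mid \mu\geq\lam\}$ is finite. Enumerate $\J_{\geq\lam}=\{\mu_1=\lam,\mu_2,\dots,\mu_n\}$ in a way that refines $\leq$, so that $\mu_i\geq\mu_j$ implies $i\leq j$. Starting from $P^{(1)}:=\Delta_A(\lam)$, I would inductively build $P^{(k+1)}$ from $P^{(k)}$ by choosing generators of $\Ext^1_{\OA^\J}(P^{(k)},\Delta_A(\mu_{k+1}))$ as an $A$-module and forming the corresponding universal extension by copies of $\Delta_A(\mu_{k+1})$. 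After $n$ steps, $P:=P^{(n)}$ has a Verma flag, and $\Ext^1_{\OA^\J}(P,\Delta_A(\mu))=0$ for every $\mu\in\J$ by construction (note: for $\mu\notin\J_{\geq\lam}$ the Ext vanishes automatically because the socles that could appear lie outside $\J$). By a standard argument this suffices to conclude projectivity in $\OA^\J$. Unique simple quotient $L_A(\lam)$ is inherited from $\Delta_A(\lam)$, so $P=P_A^\J(\lam)$ is a projective cover. The multiplicity formula is BGG reciprocity: pairing the Verma flag of $P_A^\J(\lam)$ against the costandard module $\nabla_\K(\mu)$ in $\OC_\K^\J$ (constructed by the appropriate duality) gives $(P_A^\J(\lam):\Delta_A(\mu))=\dim_\K\Hom_{\OC_\K}(P_\K^\J(\lam),\nabla_\K(\mu))=[\Delta_\K(\mu):L_\K(\lam)]$, after first reducing the computation to the residue field, which is justified by (4).

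Parts (2)--(5) I would handle by functoriality. For (2), the truncation $M\mapsto M^{\J'}$ is left adjoint to the inclusion $\OA^{\J'}\hookrightarrow\OA^\J$ and therefore preserves projectives; applied to $P_A^\J(\lam)$ it yields a projective object in $\OA^{\J'}$ with simple top $L_A(\lam)$, so indecomposability of projective covers identifies it with $P_A^{\J'}(\lam)$. For (3) and (4), observe that base change along $A\to A'$ commutes with induction from $\wb_A$, hence sends $\Delta_A(\mu)$ to $\Delta_{A'}(\mu)$; a Verma flag base-changes to a Verma flag, and projectivity survives because the same Ext-vanishing criterion applies termwise. Indecomposability under base change to the residue field follows from Nakayama-type arguments using that $A$ is local, giving $P_A^\J(\lam)\otimes_A\K\cong P_\K^\J(\lam)$. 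For (5), I would filter $P$ by its Verma flag; since $\Hom_{\OA}(\Delta_A(\mu),M)=M_\mu^{\wb_A\text{-prim}}$ and these $A$-modules base-change correctly (by the weight-space decomposition of $M$), the full Hom functor does as well, via a five-lemma argument on the induced filtration on both sides.

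The main obstacle is step (1), specifically the verification that the inductive gluing yields an object that is genuinely projective in $\OA^\J$ and not just has vanishing $\Ext^1$ against Verma modules. One has to argue that vanishing of $\Ext^1(P,\Delta_A(\mu))$ for all $\mu\in\J$ is enough to force vanishing of $\Ext^1(P,N)$ for every $N\in\OA^\J$, which requires a d\'evissage of $N$ by Verma (or sub-Verma) subquotients inside the truncated category. Since $\J$ is bounded above, any $N\in\OA^\J$ admits a composition-like filtration with Verma-like layers in the sense that one can peel off the highest weight of $N$ and proceed downwards. The second subtlety is controlling the $A$-module structure throughout: the Ext groups used in the construction must be finitely generated over $A$ so that the universal extension stays inside $\OA^\J$, and here Theorem~\ref{thmpr}(5) applied at intermediate stages (which we are trying to prove!) is replaced by a direct finite-generation argument using Verma flags inductively.
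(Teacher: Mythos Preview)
The paper does not prove this theorem at all: it is stated with the attribution ``\cite{3}, Theorem 2.8'' and no proof is given in the present paper. So there is nothing in this paper to compare your argument against; the result is imported wholesale from Arakawa--Fiebig.

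That said, your outline is essentially the standard one used in \cite{4} and \cite{3}: build $P_A^\J(\lam)$ by iterated universal extensions of $\Delta_A(\lam)$ by the finitely many $\Delta_A(\mu)$ with $\mu\in\J_{\geq\lam}$, then deduce (2)--(4) from adjunction and base-change of Verma flags. One point deserves more care. In your sketch of (5) you filter $P$ by its Verma flag and claim that $\Hom_{\OA}(\Delta_A(\mu),M)$ base-changes correctly and that a five-lemma argument finishes. Neither step is automatic: taking $\wb_A$-primitives does not in general commute with $-\otimes_A A'$ for an arbitrary $M$, and the long exact $\Hom$-sequence coming from a Verma-flag step $0\to P'\to P\to\Delta_A(\mu)\to0$ need not terminate, since $\Delta_A(\mu)$ is not projective and $\Ext^1(\Delta_A(\mu),M)$ can be nonzero. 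The argument actually used in \cite{4} (and carried over to \cite{3}) avoids this by exploiting that the weight spaces of $P$ are finitely generated free $A$-modules (a consequence of the Verma flag), so that a $\wg_A$-homomorphism $P\to M$ is determined by its values on a finite free $A$-module; freeness then gives the base-change isomorphism directly, without ever invoking $\Hom(\Delta_A(\mu),-)$ for non-projective $\Delta_A(\mu)$.
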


For $A$ a local deformation algebra with residue field $\K$, we introduce an equivalence relation on $\wh^*$. Let $\lam,\mu \in \wh^*$. We call $\mu$ \textit{$A$-larger} than $\lam$ if $[\Delta_\K(\mu): L_\K(\lam)] \neq0$. Then, $\sim_A$ is defined to be the equivalence relation on $\wh^*$ generated by pairs $(\lam, \mu)$, s.t. $\mu$ is $A$-larger than $\lam$. Let $\Lam \in \wh^* / \sim_A$. $\OC_{A, \Lam}$ is defined to be the full subcategory of $\OA$ that consists of all objects $M$ with the property that any simple subquotient of $M$ has highest weight in $\Lam$.

\begin{thm}[\cite{4}, Proposition 2.8]
The functor
\begin{eqnarray*}
		\begin{array}{ccc}
		\bigoplus\limits_{\Lambda \in \wh^{*}/\sim_A} \OC_{A,\Lambda}  &\longrightarrow&  \OA\\
		(M_\Lambda)_{\Lambda \in \wh^{*}/\sim_A}& \longmapsto & \bigoplus\limits_{\Lambda \in \wh^{*}/\sim_A} M_\Lambda
		
		\end{array}
	\end{eqnarray*}
is an equivalence.
\end{thm}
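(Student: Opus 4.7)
My plan is to establish the two standard ingredients of a block decomposition: \textbf{(I)} all morphisms between objects of $\OC_{A,\Lam}$ and $\OC_{A,\Lam'}$ vanish whenever $\Lam\neq\Lam'$, and \textbf{(II)} every $M\in\OA$ admits a direct-sum decomposition $M=\bigoplus_\Lam M_\Lam$ with $M_\Lam\in\OC_{A,\Lam}$. Together these yield both full faithfulness and essential surjectivity of the displayed functor.

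For (I), any morphism $f\colon M\to N$ with $M\in\OC_{A,\Lam}$ and $N\in\OC_{A,\Lam'}$ has image that is simultaneously a quotient of $M$ and a submodule of $N$, so every simple subquotient of $\mathrm{im}(f)$ must lie in $\Lam\cap\Lam'=\emptyset$. Since any nonzero object of $\OA$ contains a highest-weight vector, and the Verma quotient that vector generates has a simple head, it must possess at least one simple subquotient; hence $\mathrm{im}(f)=0$.

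For (II), I would first work in the truncated category $\OA^\J$ for $\J\subset\wh^*$ open and bounded. By Theorem~\ref{thmpr}(1), each Verma layer of $P_A^\J(\lam)$ is $\Delta_A(\mu)$ for some $\mu\sim_A\lam$, and the simple subquotients of $\Delta_A(\mu)$ are in turn $\sim_A$-equivalent to $\mu$; hence $P_A^\J(\lam)\in\OC_{A,\Lam}$ where $\Lam$ denotes the $\sim_A$-class of $\lam$. For arbitrary $M\in\OA^\J$ I would choose a projective surjection from a direct sum of such covers, group the summands by their $\sim_A$-class, and let $M_\Lam$ be the image in $M$ of the $\Lam$-summand. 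Then $M=\sum_\Lam M_\Lam$, and the sum is direct by (I): a nonzero element of $M_{\Lam_0}\cap\sum_{\Lam\neq\Lam_0}M_\Lam$ would generate a submodule with simple subquotients forced to lie in both $\Lam_0$ and $\bigcup_{\Lam\neq\Lam_0}\Lam$, a contradiction.

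To pass from the truncated categories to $\OA$, I would use that every $M\in\OA$ equals $\varinjlim_\J M^\J$ over the directed family of open bounded $\J$ exhausting $\wh^*$, together with the fact that the transition maps $M^{\J'}\twoheadrightarrow M^\J$ for $\J\subseteq\J'$ respect the block decompositions by (I). Since coproducts commute with filtered colimits this yields $M=\bigoplus_\Lam\varinjlim_\J(M^\J)_\Lam$ with each summand in $\OC_{A,\Lam}$. The step I expect to be the main obstacle is exactly this last passage: verifying that the block projections built separately in each $\OA^\J$ glue coherently as $\J$ grows. This should follow from Theorem~\ref{thmpr}(2), which ensures that truncating a projective cover again yields a projective cover and therefore intertwines the block projections on generators.
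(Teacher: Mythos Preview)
The paper does not supply its own proof of this statement; it is simply quoted from \cite{4}, Proposition~2.8. So there is no in-paper argument to compare against, and I will just assess your sketch on its own merits.

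Parts (I) and (II) in the truncated categories $\OA^\J$ are fine. The gap is the passage from $\OA^\J$ to $\OA$. You write $M=\varinjlim_\J M^\J$, but the truncation functors form an \emph{inverse} system: for $\J\subseteq\J'$ the transition map is the surjection $M^{\J'}\twoheadrightarrow M^\J$ that you yourself record, so there is no colimit to take. Replacing $\varinjlim$ by $\varprojlim$ does not help either: the natural map $M\to\varprojlim_\J M^\J$ is injective but in general not surjective, and the inverse limit need not lie in $\OA$. So the block decompositions of the various $M^\J$ do not automatically assemble into one for $M$.

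The repair is simpler than any limit argument. Write $M$ as the directed union of its finitely generated $\wg_A$-submodules $N$. By local $\wb_A$-finiteness each such $N$ has weights bounded above, hence $N\in\OA^\J$ for some open bounded $\J$ (take $\J=\bigcup_i\{\mu:\mu\le\nu_i\}$ for the finitely many maximal weights $\nu_i$ of $N$). Your truncated argument then gives $N=\bigoplus_\Lam N_\Lam$ with $N_\Lam\in\OC_{A,\Lam}$. Setting $M_\Lam$ equal to the sum of all submodules of $M$ lying in $\OC_{A,\Lam}$, you obtain $M=\sum_\Lam M_\Lam$, and directness of this sum follows from (I) exactly as you argue. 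No compatibility check between different $\J$'s is needed, because you are taking unions of submodules rather than gluing quotients.
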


\section{Restricted representations}

The main purpose of this chapter is to introduce the restricted deformed category $\OC$. It is introduced in \cite{3} and is a natural categorical framework to study the highest weight modules at the critical level.

\subsection{The critical level}
Let $M \in \OA$ and $\lam \in \wh^*$. The central element $K \in \wg$ acts on the weight space $M_\lam$ by $\lam(K)\in \C$. We denote by $M_k$ the eigenspace of the action of $K$ with eigenvalue $k \in \C$. If $M=M_k$ we call $M$ a \textit{module of level $k$}. The category $\OA$ decomposes into the direct sum of subcategories of modules with equal level, i.e., $\OA=\bigoplus_{k\in \C} \OC_{A,k}$, where $\OC_{A,k}$ consists of those modules which are of level $k$.\\
Let $(\cdot|\cdot): \wg \times \wg \rightarrow \C$ be the non-degenerate, symmetric and invariant bilinear form on $\wg$ which is induced by the Killing form on $\g$. Its restriction to $\wh \times \wh$ is non-degenerate as well and induces a non-degenerate bilinear form $(\cdot|\cdot)$ on $\wh^*$. For $A$ a local deformation algebra with canonical weight $\tau:\wh_A \rightarrow A$, we can extend it to $(\cdot|\cdot)_A: \wh_A^* \times \wh_A^* \rightarrow A$, the $A$-linear continuation of $(\cdot|\cdot)$. Recall the element $\rho \in \wh^*$ with $\rho(\al^\vee)=1$ for any simple root $\al \in \widehat{\Pi}$. Note that for the imaginary root $\delta \in \wR$ we have $(\delta|\delta)=0$.

\begin{definition}
For $\lam \in \wh^*$ we define the set of \textit{integral roots} (with respect to $\lam$ and $A$) by
$$\wR_A (\lam) := \{ \al \in \wR \,|\, 2(\lam+\rho+\tau|\al)_A \in \Z(\al|\al)_A \}$$
The corresponding \textit{integral Weyl group} is defined by
$$\ww_A(\lam):= \langle s_\al \,|\, \al \in \wR_A(\lam) \cap \wR^{\mathrm{re}} \rangle \subset \ww$$
\end{definition}

For an equivalence class $\Lam \in \wh^*/\sim_A$ and $\mu, \lam \in \Lam$, we have $\lam(K)=\mu(K)$ and also $\wR_A(\lam)=\wR_A(\mu)$.

The complex number $\mathrm{crit}:= -\rho(K)$ is called \textit{critical level}.

\begin{lemma}[\cite{2}, Lemma 4.2]
For $\Lam \in \wh^*/\sim_A$ the following are equivalent.
\begin{enumerate}
\item $\Lam$ is critical, i.e., $\lam(K)=\mathrm{crit}$ for all $\lam \in \Lam$.
\item We have $\lam + \delta \in \Lam$ for all $\lam \in \Lam$.
\item We have $n \delta \in \wR_A(\lam)$ for all $n \neq0$ and all $\lam \in \Lam$.
\end{enumerate}
\end{lemma}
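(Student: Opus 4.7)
The plan is to establish $(1)\Leftrightarrow(3)$ by a direct calculation with the invariant form on $\wh^*$, and to obtain $(1)\Leftrightarrow(2)$ from the Kac--Kazhdan theory of critical-level Verma modules.

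For $(1)\Leftrightarrow(3)$, I unfold the defining condition: $n\delta\in\wR_A(\lam)$ reads
$$2(\lam+\rho+\tau\,|\,n\delta)_A\in\Z\,(n\delta\,|\,n\delta)_A.$$
Since $(\delta|\delta)=0$, the right-hand side is $\{0\}$, so the condition collapses to $(\lam+\rho+\tau\,|\,\delta)_A=0$. Under the identification $\wh^*\cong\wh$ coming from the invariant form, one has $(\mu|\delta)=\mu(K)$ for every $\mu\in\wh^*$; together with $\tau(K\otimes 1)=0$ (a defining property of the canonical weight) this becomes $\lam(K)+\rho(K)=0$, i.e., $\lam(K)=\mathrm{crit}$. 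The latter does not depend on $n\neq 0$, so $(1)$ and $(3)$ are equivalent.

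For $(1)\Rightarrow(2)$, I assume $\lam(K)=\mathrm{crit}$ for $\lam\in\Lam$. Since $\delta$ is a positive root one has $\lam+\delta>\lam$, so it suffices to exhibit $L_\K(\lam)$ as a composition factor of $\Delta_\K(\lam+\delta)$, which yields the $A$-larger pair $(\lam+\delta,\lam)$. This follows from the Kac--Kazhdan determinant formula: the Shapovalov determinant of $\Delta_\K(\lam+\delta)$ at weight $\lam$ has an imaginary-root factor corresponding to $\delta$ that vanishes precisely when $\lam(K)=\mathrm{crit}$, producing the desired singular vector.

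For $(2)\Rightarrow(1)$, suppose $\lam+\delta\in\Lam$ for all $\lam\in\Lam$, so iteratively $\lam+n\delta\in\Lam$ for every $n\in\Z$. If $(1)$ failed then by $(1)\Leftrightarrow(3)$ no $n\delta$ would be integral, and the Kac--Kazhdan linkage principle would force every member of $\Lam$ to lie in the single orbit $\ww_A(\lam)\cdot\lam$ under the dot action. Using the decomposition $\ww=\W\ltimes T$ of the affine Weyl group, the $\delta$-component of $w\cdot\lam-\lam$ is controlled by $(\lam+\rho)(K)\neq 0$ together with integral data in the $\h^*$-direction, and a direct check shows this orbit cannot contain infinitely many $\delta$-translates of $\lam$, contradicting $(2)$.

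The main obstacle is the implication $(1)\Rightarrow(2)$: one needs the reducibility of $\Delta_\K(\lam+\delta)$ with $L_\K(\lam)$ among its composition factors at critical level, which is the substantive content and rests on input from the Kac--Kazhdan determinant formula (equivalently, on the existence of imaginary-root singular vectors at critical level). The other two implications are essentially bookkeeping with the invariant form and with the linkage principle.
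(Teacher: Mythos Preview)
The paper does not prove this lemma; it is quoted verbatim from \cite{2}, Lemma~4.2, so there is no in-paper argument to compare against. I will therefore comment on your proof directly.

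Your treatment of $(1)\Leftrightarrow(3)$ is correct and is exactly the intended computation: $(\delta|\delta)=0$ reduces the integrality condition to $(\lam+\rho+\tau|\delta)_A=0$, and $(\mu|\delta)=\mu(K)$ together with $\tau(K\otimes 1)=0$ yields $\lam(K)=\mathrm{crit}$. Your $(1)\Rightarrow(2)$ via the Kac--Kazhdan criterion is also correct; the imaginary-root factor in the Shapovalov determinant at $\delta$ vanishes precisely at critical level, producing $[\Delta_\K(\lam+\delta):L_\K(\lam)]\neq 0$.

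The weak point is your $(2)\Rightarrow(1)$. First, condition~(2) only gives $\lam+n\delta\in\Lam$ for $n\geq 0$ by iteration, not for all $n\in\Z$; you have not justified the negative direction. Second, and more importantly, the final ``direct check'' that the orbit $\ww_A(\lam)\cdot\lam$ cannot contain infinitely many $\delta$-translates is left as an assertion. Both issues disappear if you replace the orbit argument by the Casimir: any two weights in the same block $\Lam$ have the same generalised Casimir eigenvalue $(\mu+\rho+\tau_\K|\mu+\rho+\tau_\K)_\K$, while
\[
(\lam+\delta+\rho+\tau_\K|\lam+\delta+\rho+\tau_\K)_\K-(\lam+\rho+\tau_\K|\lam+\rho+\tau_\K)_\K
=2(\lam+\rho|\delta)=2(\lam(K)+\rho(K)).
\]
Thus $\lam+\delta\in\Lam$ forces $\lam(K)=-\rho(K)=\mathrm{crit}$ immediately, with no need for Kac--Kazhdan linkage or an orbit-size estimate. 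Your overall strategy is sound, but this implication should be tightened along these lines.
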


\subsection{Restricted representations}

We recall the action of the Fei\-gin-Fren\-kel center on $\OA$ as explained in \cite{2} and \cite{3}. Let $A$ be a local deformation algebra with residue field $\K$. In chapter 3.1 of \cite{3} the authors introduce an equivalence

$$T: \OA \longrightarrow \OA$$
defined by $T:= \cdot \otimes_\C L(\delta)$ where $L(\delta)$ is the one dimensional simple $\wg$-module with highest weight $\delta$. The inverse functor of $T$ is given by $T^{-1} = \cdot \otimes_\C L(-\delta)$. Denote by $T^n$ the $n$-fold composition of $T$. 

\begin{lemma}[\cite{3}, Chapter 3.1]
A block $\OC_{A,\Lam}$ with $\Lam \in \wh^*/\sim_A$ is preserved by the functor $T$ if and only if $\Lam$ is of level $k(\Lam)= \mathrm{crit}$.
\end{lemma}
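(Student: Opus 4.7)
The plan is to reduce the statement to the previous lemma (part (2) of Lemma 3.2) by analyzing what the equivalence $T$ does on simple objects.

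First, I would observe that since $L(\delta)$ is one-dimensional, tensoring with it shifts every weight by $\delta$ on the nose: if $v \in M_\mu$ (deformed $\mu$-weight space) and $w$ spans $L(\delta)$, then $H(v \otimes w) = (Hv)\otimes w + v \otimes (Hw) = (\mu(H)+\tau(H)+\delta(H))(v\otimes w)$ for all $H \in \wh$, so $(M \otimes L(\delta))_{\mu+\delta} = M_\mu \otimes L(\delta)$. In particular, $T$ sends a highest weight vector to a highest weight vector and preserves simplicity (since $T^{-1}= \cdot \otimes L(-\delta)$ is inverse to $T$ and would have to send any proper submodule back to a proper submodule). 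Therefore
$$T(L_A(\lam)) \cong L_A(\lam + \delta) \qquad \text{for all } \lam \in \wh^*.$$

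Next, I would use this to track what $T$ does to blocks. Since $T$ is an equivalence and the block $\OC_{A,\Lam}$ is characterised by the highest weights of simple subquotients lying in $\Lam$, the image $T(\OC_{A,\Lam})$ is the full subcategory whose simple subquotients have highest weights in the shifted set $\Lam + \delta := \{\mu + \delta \mid \mu \in \Lam\}$. Note that $\Lam + \delta$ is itself an equivalence class for $\sim_A$: indeed, $\Delta_\K(\mu + \delta) \cong \Delta_\K(\mu) \otimes L(\delta)$ as $\wg$-modules, so the composition multiplicities satisfy $[\Delta_\K(\mu + \delta):L_\K(\nu+\delta)] = [\Delta_\K(\mu):L_\K(\nu)]$, and the relation "$A$-larger than'' is translation-invariant under $\delta$. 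Hence $T$ restricts to an equivalence
$$T:\OC_{A,\Lam} \xrightarrow{\sim} \OC_{A,\Lam + \delta}.$$

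Therefore $T$ preserves $\OC_{A,\Lam}$ as a subcategory of $\OA$ if and only if $\Lam + \delta = \Lam$ as subsets of $\wh^*$, which (because translation by $\delta$ is a bijection on $\wh^*$) is equivalent to $\lam + \delta \in \Lam$ for all $\lam \in \Lam$. By Lemma 3.2, part (2), this condition is equivalent to $\Lam$ being critical, finishing the proof.

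The only minor obstacle is justifying that $\Lam + \delta$ is again a full $\sim_A$-equivalence class rather than a union of pieces of several classes; this is handled by the translation-invariance of Verma composition multiplicities explained above, which in turn follows from $L(\delta)$ being a one-dimensional tensor factor and hence acting as a grading shift on the whole category.
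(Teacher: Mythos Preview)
The paper does not supply a proof of this lemma; it is stated with a citation to \cite{3}, Chapter 3.1, and used as a black box. Your argument is correct and is essentially the standard one: $T$ shifts all weights by $\delta$, hence permutes simples by $L_A(\lam)\mapsto L_A(\lam+\delta)$, so it sends $\OC_{A,\Lam}$ to $\OC_{A,\Lam+\delta}$, and preservation of the block is equivalent to $\Lam+\delta=\Lam$, which by the preceding lemma is precisely the criticality condition.

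One small remark on the justification of $\Lam+\delta=\Lam$ from the inclusion $\Lam+\delta\subseteq\Lam$: the parenthetical ``because translation by $\delta$ is a bijection on $\wh^*$'' is not by itself enough (a bijection can map a set properly into itself). What actually closes the argument is exactly the point you made just before: you showed $\Lam+\delta$ is again a full $\sim_A$-equivalence class, and distinct equivalence classes are disjoint, so $\Lam+\delta\subseteq\Lam$ forces equality. With that in mind the proof is complete.
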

Let $V^\mathrm{crit}(\g)$ be the universal affine vertex algebra associated with $\g$ at the critical level and denote by $\mathfrak{z}$ its center. On any $M \in \OC_{A,\mathrm{crit}}$, $\mathfrak{z}$ induces the structure of a graded module over the polynomial ring (of infinite rank)
$$Z:= \bigoplus_{n \in \Z} Z_n = \C[p_s^{(i)}, \, i=1,..., \mathrm{rk} \g, \, s \in \Z].$$
Any $z \in Z_n$ for $n \in \Z$ acts on $\OA$ as a natural transformation from $T^n$ to the identity functor on $\OA$. Thus, the action of $z$ on $M$ is given by a homomorphism $z^M:T^n M\rightarrow M$. This action respects base change, i.e., for $A\rightarrow A'$ a homomorphism of deformation algebras, the base change functor $\cdot\otimes_A A':\OC_{A,\mathrm{crit}}\rightarrow \OC_{A',\mathrm{crit}}$ induces a natural isomorphism $z^{M\otimes_A A'} \cong z^M \otimes_A \mathrm{id}_{A'}$.

\begin{definition}
Let $M \in \OC_{A,\mathrm{crit}}$. We call $M$ \textit{restricted} if for all $n \neq 0$ and all $z \in Z_n$, the homomorphism $z^M: T^n M \rightarrow M$ is zero.
Then the \textit{restricted deformed category} $\overline{\OC}_{A, \mathrm{crit}}$ is the full subcategory of $\OC_{A,\mathrm{crit}}$ of all restricted modules. 
\end{definition}

For $\J \subset \wh^*$ open and bounded let $\overline{\OC}_{A,\mathrm{crit}}^\J := \overline{\OC}_{A,\mathrm{crit}} \cap \OC_A^\J$. Denote by $Z_n M$ the submodule of $M$ generated by the subset $\{z^M(m)\,|\, m \in T^nM, z\in Z_n\}\subset M$. The functor $(\cdot)^\mathrm{res}:\OC_{A, \mathrm{crit}} \rightarrow \overline{\OC}_{A,\mathrm{crit}}$ given by
$$M \rightarrow M^\mathrm{res}:= M/\sum_{n \in \Z \backslash \{0\}} Z_n M$$
is well defined and left adjoint to the inclusion functor $\overline{\OC}_{A, \mathrm{crit}} \subset \OC_{A, \mathrm{crit}}$. 
\begin{definition}
Let $\lam \in \wh^*$ be of critical level. The restricted Verma module is defined by
$$\rdel_A(\lam):= \del_A (\lam)^{\mathrm{res}}$$
\end{definition}

We collect some results which can be found in \cite{3} and which we need for the calculation of the center of a critical restricted block.
\begin{lemma}
Restricted Verma modules respect base change, i.e., for any homomorphism $A \rightarrow A'$ of deformation algebras we have $\rdel_A(\lam) \otimes_A A' \cong \rdel_{A'}(\lam)$.
\end{lemma}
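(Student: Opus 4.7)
The plan is to exploit the right-exactness of the base-change functor $\cdot\otimes_A A'$ together with the fact that each of the ingredients in the definition of $\rdel_A(\lam)$ commutes with base change. Recall that $\rdel_A(\lam)$ sits in a right-exact sequence
\begin{equation*}
\sum_{n\in\Z\setminus\{0\}} Z_n\del_A(\lam)\hookrightarrow \del_A(\lam)\twoheadrightarrow \rdel_A(\lam)\to 0.
\end{equation*}
Applying $\cdot\otimes_A A'$ yields a right-exact sequence
\begin{equation*}
\Bigl(\sum_{n\neq 0} Z_n\del_A(\lam)\Bigr)\otimes_A A'\longrightarrow \del_A(\lam)\otimes_A A'\longrightarrow \rdel_A(\lam)\otimes_A A'\longrightarrow 0,
\end{equation*}
and the middle term is canonically isomorphic to $\del_{A'}(\lam)$ (Verma modules respect base change, by their universal property). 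It therefore suffices to identify the image of the left-hand arrow with $\sum_{n\neq 0}Z_n\del_{A'}(\lam)$.

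First I would reduce to checking that the image of $Z_n\del_A(\lam)\otimes_A A'$ in $\del_{A'}(\lam)$ coincides with $Z_n\del_{A'}(\lam)$ for each $n\neq 0$ separately. Since $L(\delta)$ is a $\C$-vector space, the functor $T^n=\cdot\otimes_\C L(n\delta)$ commutes with $\cdot\otimes_A A'$; hence $T^n\del_A(\lam)\otimes_A A'\cong T^n\del_{A'}(\lam)$. By the naturality stated in the excerpt, for $z\in Z_n$ the homomorphism $z^{M\otimes_A A'}$ agrees with $z^M\otimes_A\mathrm{id}_{A'}$; applying this to $M=\del_A(\lam)$ and chasing generators shows that the image of $Z_n\del_A(\lam)\otimes_A A'$ in $\del_{A'}(\lam)$ contains every element of the form $z^{\del_{A'}(\lam)}(m\otimes 1)$ with $m\in T^n\del_A(\lam)$ and $z\in Z_n$. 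As a $\wg_{A'}$-submodule this contains $Z_n\del_{A'}(\lam)$ because every element $m'\in T^n\del_{A'}(\lam)=T^n\del_A(\lam)\otimes_A A'$ is an $A'$-linear combination of pure tensors $m\otimes 1$ and $z^{\del_{A'}(\lam)}$ is $A'$-linear. Conversely, the image is contained in $Z_n\del_{A'}(\lam)$ by construction, giving equality.

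Once this is established for every $n\neq 0$, summing and composing with the isomorphism $\del_A(\lam)\otimes_A A'\cong\del_{A'}(\lam)$ shows that the right-exact sequence above identifies with
\begin{equation*}
\sum_{n\neq 0} Z_n\del_{A'}(\lam)\hookrightarrow \del_{A'}(\lam)\twoheadrightarrow \rdel_{A'}(\lam)\to 0,
\end{equation*}
so $\rdel_A(\lam)\otimes_A A'\cong \rdel_{A'}(\lam)$.

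The only nontrivial point is the submodule identification; concretely, one must be careful that "submodule generated by" behaves well under base change. This works because $\cdot\otimes_A A'$ is right-exact on the abelian subcategory $\OC_{A,\mathrm{crit}}$ and the $z^M$ are compatible with it by hypothesis, so image formation commutes with base change. Everything else is formal.
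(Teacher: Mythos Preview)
Your proof is correct and follows exactly the same approach as the paper's, which is a two-line argument invoking the two facts you use: that ordinary Verma modules respect base change and that the action of the Feigin--Frenkel center respects base change. You have simply unpacked in detail what the paper leaves to the reader.
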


\begin{proof}
The claim is obvious for non-restricted Verma modules. Since the action of the center respects base change it is also true for restricted Verma modules.
\end{proof}

\begin{lemma}[\cite{3}, Lemma 3.8, Lemma 3.3, Lemma 3.4]\label{lem2}
Let $\lam \in \wh^*$ be critical, $\J \subset \wh^*$ open, $A \rightarrow A'$ a homomorphism of local deformation algebras and $M \in \OA$. Then 
\begin{enumerate}
\item for any $\mu \in \wh^*$ and $\K$ the residue field of $A$ the weight space $\rdel_A(\lam)_\mu$ is a free $A$-module with
$$\mathrm{rk}_A \rdel_A(\lam)_\mu = \mathrm{dim}_\K \rdel_\K(\lam)_\mu,$$
\item $(M^{\mathrm{res}})^\J \cong (M^\J)^\mathrm{res}$,
\item the canonical map
$$(M \otimes_A A')^\mathrm{res} \stackrel{}{\rightarrow} M^\mathrm{res} \otimes_A A'$$
is an isomorphism.
\end{enumerate}
\end{lemma}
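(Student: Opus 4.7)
The plan is to prove the three parts in the order (3), (2), (1). Part (3) is a formal consequence of the base-change compatibility $z^{M\otimes_A A'}\cong z^M\otimes_A\mathrm{id}_{A'}$ recorded in the paragraph just preceding the definition of restricted modules; part (2) follows by a similar naturality argument applied to the truncation functor; part (1) is then deduced from (3) together with an $A$-freeness argument for the weight spaces of restricted Verma modules.

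For (3), I would observe that the natural isomorphism $T^n(M\otimes_A A')\cong(T^nM)\otimes_A A'$ intertwines $z^{M\otimes_A A'}$ with $z^M\otimes_A\mathrm{id}_{A'}$, so the image of the former equals the image of the latter, namely $\mathrm{Im}(z^M)\otimes_A A'$. Summing over all $n\neq 0$ and all $z\in Z_n$ and taking the $\wg_{A'}$-submodule they generate yields $\sum_{n\neq 0}Z_n(M\otimes_A A')=\bigl(\sum_{n\neq 0}Z_nM\bigr)\otimes_A A'$ inside $M\otimes_A A'$; quotienting produces the claimed isomorphism, which is visibly compatible with the canonical comparison map.

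For (2), the truncation $\pi:M\to M^\J$ is a morphism in $\OC_{A,\mathrm{crit}}$, so naturality of each $z\in Z_n$ forces $\pi\circ z^M=z^{M^\J}\circ T^n(\pi)$. Since $T^n(\pi)$ is surjective, $\pi$ sends $\sum_{n\neq 0}Z_nM$ onto $\sum_{n\neq 0}Z_nM^\J$, and a direct diagram chase now identifies both $(M^{\mathrm{res}})^\J$ and $(M^\J)^{\mathrm{res}}$ with the single quotient of $M$ by the $\wg_A$-submodule $U(\wg_A)\bigoplus_{\mu\notin\J}M_\mu+\sum_{n\neq 0}Z_nM$.

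For (1), applying (3) with $A'=\K$ gives $\rdel_A(\lam)\otimes_A\K\cong\rdel_\K(\lam)$, and hence $\rdel_A(\lam)_\mu\otimes_A\K\cong\rdel_\K(\lam)_\mu$; the rank equality will then follow once $A$-freeness of $\rdel_A(\lam)_\mu$ is established. This is the main obstacle. The weight space $\del_A(\lam)_\mu$ is finitely generated and free over $A$ by the PBW theorem applied to $U(\wg_A)$, so the task is to produce a free $A$-complement to $\bigl(\sum_{n\neq 0}Z_n\del_A(\lam)\bigr)_\mu$. My plan is to invoke the Feigin--Frenkel description of the center $\mathfrak{z}$ as a polynomial algebra on the symbols $p_s^{(i)}$, whose action on $\del_A(\lam)$ is defined over $\C$ and commutes with the deformation; this produces a PBW-type factorisation $\del_A(\lam)\cong\rdel_A(\lam)\otimes_\C\mathrm{Sym}(\mathfrak{z}_{\neq 0})$ of $A$-modules compatible with the weight grading. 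Comparing weight spaces before and after base change to $\K$ then yields both the $A$-freeness and the stated rank identity. The delicate step is justifying this tensor factorisation in the deformation setting, which ultimately rests on the freeness of $\del_A(\lam)$ over $A$ coming from the triangular decomposition of $U(\wg_A)$ together with the PBW-compatibility of the non-zero-mode action of $\mathfrak{z}$.
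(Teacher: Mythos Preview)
The paper does not supply a proof of this lemma; it is stated with an attribution to \cite{3} (Lemmas 3.3, 3.4, 3.8 there), and only the subsequent Remark comments on the formulation of part (3). So there is no ``paper's own proof'' to compare against.

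That said, your sketch for parts (2) and (3) is essentially the argument one finds in \cite{3}: both rely only on the naturality of the transformations $z^M$ and on right exactness of tensoring (resp.\ surjectivity of the truncation map), and your identification of both sides with a single quotient of $M$ is correct and clean.

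For part (1) your plan is in the right direction, and you correctly isolate the real content: freeness of the weight spaces of $\rdel_A(\lam)$. The factorisation $\del_A(\lam)\cong \rdel_A(\lam)\otimes_\C \C[p_s^{(i)}:s\neq 0]$ that you invoke is exactly how \cite{3} obtains this, but it is not a formal consequence of the PBW theorem for $U(\wg_A)$ alone; it uses the Feigin--Frenkel theorem that $\mathfrak{z}$ is a polynomial ring on the $p_s^{(i)}$ together with the fact that the critical Verma module (equivalently the vacuum module $V^{\mathrm{crit}}(\g)$) is \emph{free} over this polynomial ring. That freeness is the deep input, and your phrase ``PBW-compatibility of the non-zero-mode action of $\mathfrak{z}$'' does not quite capture it. If you want a self-contained argument you would have to either cite Feigin--Frenkel/Frenkel--Ben-Zvi for this freeness or reproduce the vertex-algebra argument; otherwise your reduction to this step is sound.
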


\begin{remark}
Note that in \cite{3}, Lemma 3.4, the isomorphism in (3) of the above lemma is formulated by
$$(M \otimes_A A')^\mathrm{res} \stackrel{\sim}{\rightarrow} (M^\mathrm{res} \otimes_A A')^\mathrm{res}$$
But since $M^\mathrm{res} \otimes_A A'$ is an object of $\overline{\OC}_{A'}$ already, we do not have to restrict it anymore.
\end{remark}

\subsection{Restricted projective objects}

We recall the construction of projective covers from \cite{3}. Let $A$ be a local deformation algebra with residue field $\K$, $\J \subset \wh^*$ an open, bounded subset and let $\lam \in \J$ be of critical level. Recall the projective cover $P_A ^\J (\lam) \twoheadrightarrow L_A(\lam)$. Define $\overline{P}_A^\J(\lam):=P_A^\J(\lam)^\mathrm{res}$. We say, a module $M \in \overline{\OC}_{A,\mathrm{crit}}$ has a \textit{restricted Verma flag}, if it has a finite filtration with subquotients isomorphic to restricted Verma modules. As in the non-restricted case, we denote by $(M:\rdel_A(\lam))$ the multiplicity of $\rdel_A(\lam)$ as a subquotient in a restricted Verma flag of $M$. 

\begin{thm}[\cite{3}, Theorem 4.9, and \cite{5}, Theorem 4.3]\label{thm1}
Let $A$ be a local deformation algebra with residue field $\K$, $\J$ an open and bounded subset of $\wh^*$ and $\mu \in \J$ be of critical level. Then $\rp_A^\J(\mu)$ admits a restricted Verma flag and we have
$$(\rp_A^\J(\mu):\rdel_A(\lam))=\begin{cases} [\rdel_\K(\lam):L_\K(\mu)], \,\,\,\,\mathrm{if} \, \lam \in \J \\ 0, \,\,\,\,\,\,\,\mathrm{otherwise} \end{cases}$$
for all $\lam \in \wh^*$. Furthermore, $\rp_A^\J(\mu)$ is a projective cover of $L_A(\mu)$ in $\overline{\OC}_A^\J$.
\end{thm}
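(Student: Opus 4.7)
My plan is to set $\rp_A^\J(\mu):=P_A^\J(\mu)^{\mathrm{res}}$ and verify in turn projectivity, the projective cover property, the existence of a restricted Verma flag, and the multiplicity formula. The main tools are the adjunction properties of $(\cdot)^{\mathrm{res}}$ together with base change to the residue field $\K$, which will allow the deformed statements to be reduced to those over $\K$ already treated in \cite[Theorem 4.9]{3} and \cite[Theorem 4.3]{5}.

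First I would establish projectivity and the projective cover property. Since $(\cdot)^{\mathrm{res}}$ is left adjoint to the inclusion $\overline{\OC}_{A,\mathrm{crit}}\hookrightarrow\OC_{A,\mathrm{crit}}$ it preserves projectives, and Lemma \ref{lem2}(2) makes it compatible with the truncation $(\cdot)^\J$; hence $\rp_A^\J(\mu)$ is projective in $\overline{\OC}_A^\J$. The simple module $L_A(\mu)$ is automatically restricted, because for $n\neq 0$ any morphism $z^{L_A(\mu)}\colon T^nL_A(\mu)\to L_A(\mu)$ is a map between simples of distinct highest weights ($\mu+n\delta\neq\mu$) and must vanish. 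The surjection $P_A^\J(\mu)\twoheadrightarrow L_A(\mu)$ thus factors through a surjection $\rp_A^\J(\mu)\twoheadrightarrow L_A(\mu)$. Combining Theorem \ref{thmpr}(4) with Lemma \ref{lem2}(3) yields $\rp_A^\J(\mu)\otimes_A\K\cong\rp_\K^\J(\mu)$, so that indecomposability and the cover property lift by Nakayama from the residue-field case of \cite[Theorem 4.9]{3}.

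For the restricted Verma flag I would begin with a Verma flag $0=M_0\subset M_1\subset\cdots\subset M_n=P_A^\J(\mu)$ provided by Theorem \ref{thmpr}(1), and set $N:=\sum_{k\neq 0}Z_kP_A^\J(\mu)$, so that $\rp_A^\J(\mu)=P_A^\J(\mu)/N$. The images $\overline{M}_i:=(M_i+N)/N$ form a filtration whose subquotients are quotients of $\Delta_A(\lam_i)$. The main point, and the step I expect to be the most delicate, is showing that the image of $N\cap M_i$ inside $M_i/M_{i-1}\cong\Delta_A(\lam_i)$ coincides with $\sum_{k\neq 0}Z_k\Delta_A(\lam_i)$, so each subquotient is either $\rdel_A(\lam_i)$ or zero. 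This is a naturality statement for the Feigin--Frenkel center: the action of $z\in Z_n$ on any subquotient is induced from the universal transformation $T^n\Rightarrow\mathrm{id}$ evaluated on that subquotient regarded intrinsically as an object of $\OC_{A,\mathrm{crit}}$. Discarding the steps with zero subquotient then produces a restricted Verma flag.

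For the multiplicity formula I would invoke Lemma \ref{lem2}(1) and (3) to conclude that the weight spaces of $\rp_A^\J(\mu)$ are free over $A$ with ranks equal to the $\K$-dimensions of the corresponding weight spaces of $\rp_\K^\J(\mu)$. Matching characters between the restricted Verma flag constructed above and the one over $\K$ gives $(\rp_A^\J(\mu):\rdel_A(\lam))=(\rp_\K^\J(\mu):\rdel_\K(\lam))$, and the right-hand side equals $[\rdel_\K(\lam):L_\K(\mu)]$ by the restricted BGG-type reciprocity content of \cite[Theorem 4.9]{3} and \cite[Theorem 4.3]{5}.
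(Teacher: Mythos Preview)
The paper does not give a proof of this theorem at all; it is quoted from \cite{3} and \cite{5} and used as a black box. So there is no in-text argument to compare your sketch against.

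On the substance of your outline: the projectivity and projective-cover steps are sound, and the reduction to the residue field via Lemma~\ref{lem2}(3) and Theorem~\ref{thmpr}(4) is the right mechanism. The genuine gap is in the restricted Verma flag. Naturality of the $Z_n$-action with respect to the surjection $M_i\twoheadrightarrow \Delta_A(\lam_i)$ gives you only one inclusion, namely $\sum_{k\neq 0}Z_k\Delta_A(\lam_i)\subseteq \mathrm{image}(N\cap M_i)$; equivalently, each $\overline{M}_i/\overline{M}_{i-1}$ is a \emph{quotient} of $\rdel_A(\lam_i)$. The reverse inclusion is not a formal consequence of naturality: an element of $N\cap M_i$ has the form $\sum_j z_j^{P}(m_j)$ with $m_j\in T^{k_j}P$ that merely happens to land in $M_i$, and nothing forces the individual $m_j$ to lie in $T^{k_j}M_i$. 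Your character-matching repair does not close this either. Lemma~\ref{lem2}(1) is about restricted Verma modules, not about $\rp_A^\J(\mu)$, so you cannot conclude that the weight spaces of $\rp_A^\J(\mu)$ are free of the expected rank before you already know it has a restricted Verma flag. And even if you grant freeness, matching total characters does not pin down the individual subquotients: the ordinary Verma flag of $P_A^\J(\mu)$ has $[\Delta_\K(\lam):L_\K(\mu)]$ layers of type $\lam$, strictly more than the $[\rdel_\K(\lam):L_\K(\mu)]$ you want, so many of your $\overline{M}_i/\overline{M}_{i-1}$ must collapse to proper quotients of $\rdel_A(\lam_i)$ or to zero, and a character count alone does not tell you which. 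In \cite{3} this step is handled by a direct analysis of how $\sum_{n\neq 0}Z_nM$ interacts with a Verma filtration, not by adjunction and naturality alone; you would need to import that argument rather than treat it as automatic.
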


For the method of proof we are using for calculating the center of a restricted critical block, the following lemma will be quite important.

\begin{lemma}\label{lem3}
Let $A \rightarrow A'$ be a homomorphism of local deformation algebras. Let $M, \rp \in \overline{\OC}_{A}^\J$ and $\rp$ be a finitely generated projective object. Then the canonical map
$$\Hom_{\overline{\OC}_A}(\rp,M)\otimes_A A' \longrightarrow \Hom_{\overline{\OC}_{A'}}(\rp \otimes_A A',M\otimes_A A')$$
is an isomorphism of $A'$-modules.
\end{lemma}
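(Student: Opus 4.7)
The strategy is to reduce the claim to its non-restricted analogue, Theorem \ref{thmpr}(5), using the adjunction between the restriction functor $(\cdot)^{\mathrm{res}}$ and the inclusion $\overline{\OC}_{A,\mathrm{crit}} \hookrightarrow \OC_{A,\mathrm{crit}}$, together with Lemma \ref{lem2}(3), which says that $(\cdot)^{\mathrm{res}}$ commutes with base change. Since both sides of the canonical map are additive functors in $\rp$ and respect finite direct sums as well as extraction of direct summands, it is enough to treat the case $\rp = \rp_A^\J(\mu) = P_A^\J(\mu)^{\mathrm{res}}$ for some critical $\mu \in \J$: by Theorem \ref{thm1} every finitely generated projective object of $\overline{\OC}_A^\J$ appears as a direct summand of a finite sum of such restricted projective covers.

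For $\rp = \rp_A^\J(\mu)$ I would then construct the required isomorphism as the composition of the following natural isomorphisms:
\begin{align*}
\Hom_{\overline{\OC}_A}(\rp_A^\J(\mu), M) \otimes_A A'
&\stackrel{(a)}{\cong} \Hom_{\OA}(P_A^\J(\mu), M) \otimes_A A' \\
&\stackrel{(b)}{\cong} \Hom_{\OC_{A'}}(P_A^\J(\mu) \otimes_A A',\, M \otimes_A A') \\
&\stackrel{(c)}{\cong} \Hom_{\overline{\OC}_{A'}}\bigl((P_A^\J(\mu) \otimes_A A')^{\mathrm{res}},\, M \otimes_A A'\bigr) \\
&\stackrel{(d)}{\cong} \Hom_{\overline{\OC}_{A'}}(\rp_A^\J(\mu) \otimes_A A',\, M \otimes_A A'),
\end{align*}
where (a) is the adjunction isomorphism, applicable because $M$ is already restricted; (b) is Theorem \ref{thmpr}(5) applied to the finitely generated projective object $P_A^\J(\mu)$ of $\OA^\J$; (c) is the adjunction isomorphism over $A'$, valid since Lemma \ref{lem2}(3) implies that $M \otimes_A A'$ is restricted; and (d) uses the base-change compatibility of $(\cdot)^{\mathrm{res}}$ from Lemma \ref{lem2}(3) to identify $(P_A^\J(\mu) \otimes_A A')^{\mathrm{res}} \cong P_A^\J(\mu)^{\mathrm{res}} \otimes_A A' = \rp_A^\J(\mu) \otimes_A A'$.

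The main point that needs verification is that this composite coincides with the canonical map of the statement. This is not a genuine obstacle but the place requiring care: each of the four isomorphisms is given by a universal property or a natural identification, and all of them are compatible with base change by construction, so naturality delivers the required agreement. In effect, the deep input is carried entirely by Theorem \ref{thmpr}(5); the restricted version is obtained by sandwiching that result between two adjunction isomorphisms and one application of the fact that restriction commutes with base change.
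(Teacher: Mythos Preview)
Your proof is correct and follows essentially the same route as the paper: reduce to the non-restricted base change result Theorem~\ref{thmpr}(5) via the adjunction $(\cdot)^{\mathrm{res}} \dashv$ inclusion on both sides, using Lemma~\ref{lem2}(3) to identify $(P \otimes_A A')^{\mathrm{res}}$ with $P^{\mathrm{res}} \otimes_A A'$. The only cosmetic difference is that the paper lifts an arbitrary finitely generated projective $\rp$ directly to some $P$ with $P^{\mathrm{res}} \cong \rp$ rather than first reducing to the indecomposable covers $\rp_A^\J(\mu)$, but this amounts to the same thing.
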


\begin{proof}
By Theorem \ref{thm1} we find a finitely generated projective object $P \in \OA^\J$, s.t. $P^\mathrm{res} \cong \rp$. Since $(\cdot)^{\mathrm{res}}$ is left adjoint to the inclusion functor $\overline{\OC}_A^\J \hookrightarrow \OA^\J$ and since $M\otimes_A A'$ is an object of $\overline{\OC}_{A'}$, we get 
$$\Hom_{\overline{\OC}_A}(P^{\mathrm{res}},M)\otimes_A A' \stackrel{\sim}{\longrightarrow} \Hom_{{\OC}_A}(P,M)\otimes_A A'$$
and 
$$\Hom_{\overline{\OC}_{A'}}((P \otimes_A A')^{\mathrm{res}},M\otimes_A A') \stackrel{\sim}{\longrightarrow} \Hom_{{\OC}_{A'}}(P \otimes_A A',M\otimes_A A')$$
By Lemma \ref{lem2}, we have $(P \otimes_A A')^{\mathrm{res}} \cong P^{\mathrm{res}}\otimes_A A'$. Thus, the claim follows from Theorem \ref{thmpr} (5).
\end{proof}

\begin{remark}
The Feigin-Frenkel conjecture claims that for $\lam, \mu \in \J$ the multiplicities $(\rp_A^\J(\mu):\rdel_A(\lam))$ are given by certain periodic Kazhdan-Lusztig polynomials evaluated at one. These periodic polynomials depend on the relative position between $\lam$ and $\mu$, and if $\lam$ and $\mu$ are "far away" from each other these polynomials are zero. Thus, if the Feigin-Frenkel conjecture was true, the projective cover $\rp_A^\J(\mu)$ would stabilize for $\J$ big enough and $\rp_A^\J(\mu)$  would also be a projective cover of $L_A(\mu)$ in the bigger non-truncated category $\overline{\OC}_A$.
\end{remark}

\subsection{The restricted block decomposition}

We set 
$$\hc:= \{\lam \in \wh^*\,|\,\lam(K)=\mathrm{crit}\}.$$
We define an equivalence relation $\sim_A^\mathrm{res}$ on the critical hyperplane $\hc$. Let $\lam, \mu \in \hc$. We say, $\mu$ is \textit{restricted $A$-larger} than $\lam$ if $L_A(\lam)$ appears as a subquotient of $\rp^\J_A(\mu)$ for $\J\subset \wh^*$ big enough. $\sim_A^\mathrm{res}$ is then defined to be the equivalence relation on $\hc$ that is generated by pairs $(\lam,\mu)$, s.t. $\mu$ is restricted $A$-larger than $\lam$. For an equivalence class $\Lam \in \hc/\sim_A^\mathrm{res}$ let $\overline{\OC}_{A,\Lam} \subset \overline{\OC}_{A,\mathrm{crit}}$ be the full subcategory of objects $M$, such that $[M:L_A(\lam)]\neq 0$ implies $\lam \in \Lam$.

\begin{thm}[\cite{3}, Theorem 5.2]
The functor
\begin{eqnarray*}
		\begin{array}{ccc}
		\bigoplus\limits_{\Lambda \in \hc/\sim_A^\mathrm{res}} \overline{\OC}_{A,\Lambda}  &\longrightarrow&  \overline{\OC}_{A,\mathrm{crit}}\\
		(M_\Lambda)_{\Lambda \in \hc/\sim_A^\mathrm{res}}& \longmapsto & \bigoplus\limits_{\Lambda \in \wh^{*}/\sim_A^\mathrm{res}} M_\Lambda
		
		\end{array}
	\end{eqnarray*}
is an equivalence.
\end{thm}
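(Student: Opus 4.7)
The plan is to reduce the decomposition to the truncated subcategories $\overline{\OC}_{A,\mathrm{crit}}^{\J}$ for $\J \subset \hc$ open and bounded, where Theorem~\ref{thm1} provides restricted projective covers, and then to glue as $\J$ grows.

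The first step is the observation that by the very definition of $\sim_A^\mathrm{res}$, every simple subquotient of the restricted projective cover $\rp_A^\J(\mu)$ has highest weight in the equivalence class $[\mu]$; equivalently $\rp_A^\J(\mu) \in \overline{\OC}_{A,[\mu]}^\J$. Consequently, whenever $\mu \not\sim_A^\mathrm{res} \nu$, the Hom-group $\Hom_{\overline{\OC}_A^\J}(\rp_A^\J(\mu), \rp_A^\J(\nu))$ vanishes, since the image of any nonzero morphism would admit $L_A(\mu)$ as a quotient and sit inside $\rp_A^\J(\nu)$, forcing $L_A(\mu)$ to be a subquotient of an object of $\overline{\OC}_{A,[\nu]}^\J$.

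Next I would decompose each $M \in \overline{\OC}_{A,\mathrm{crit}}^\J$ by setting $M_\Lambda \subset M$ equal to the sum of images of all morphisms from objects of $\overline{\OC}_{A,\Lambda}^\J$ into $M$. Since $M_\Lambda \cap M_{\Lambda'} = 0$ for $\Lambda \neq \Lambda'$ (an intersection would be a submodule with simple subquotients in two disjoint classes, hence zero), the natural map $\bigoplus_\Lambda M_\Lambda \to M$ is injective. Surjectivity follows by choosing an epimorphism $\bigoplus_i \rp_A^\J(\mu_i) \twoheadrightarrow M$ (Theorem~\ref{thm1}), partitioning the index set by $[\mu_i]$, and noting that the image of each block lies in $M_{[\mu_i]}$. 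This yields $\overline{\OC}_{A,\mathrm{crit}}^\J = \bigoplus_\Lambda \overline{\OC}_{A,\Lambda}^\J$.

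Finally, to globalize, for arbitrary $M \in \overline{\OC}_{A,\mathrm{crit}}$ I would define $M_\Lambda$ as the maximal submodule whose simple subquotients all have highest weight in $\Lambda$. For each $m \in M$, local $\wb_A$-finiteness forces $U(\wg_A)m$ to have weights bounded above, so it lies in some $\overline{\OC}_{A,\mathrm{crit}}^\J$, and the truncated decomposition splits it into $\sum_\Lambda m_\Lambda$. The main obstacle I anticipate is verifying that the individual $m_\Lambda$ are independent of the chosen $\J$ and really land in $M_\Lambda$; this should follow from the naturality of $\rp_A^\J(\mu)$ in $\J$ (Theorem~\ref{thm1}(2)) and from the compatibility of truncation and restriction provided by Lemma~\ref{lem2}(2). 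Once this is established, $M = \bigoplus_\Lambda M_\Lambda$ follows and the functor of the theorem is an equivalence.
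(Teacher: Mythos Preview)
The paper does not give its own proof of this statement: it is quoted verbatim from \cite{3}, Theorem~5.2, and the text moves on immediately after stating it. So there is nothing in the paper to compare your argument against.

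That said, your sketch is the standard route to such a block decomposition and is essentially how the result is obtained in \cite{3}. Two small points worth tightening if you want it to stand on its own. First, in the truncated step you invoke an epimorphism $\bigoplus_i \rp_A^\J(\mu_i)\twoheadrightarrow M$; for an arbitrary $M\in\overline{\OC}_{A,\mathrm{crit}}^\J$ this may require an infinite index set, so you should either restrict to finitely generated $M$ (which suffices, since every object is the directed union of its finitely generated submodules) or check that infinite direct sums of the $\rp_A^\J(\mu_i)$ still live in the category. Second, your ``main obstacle'' in the globalization is not really an obstacle: once you know the truncated decomposition, the component $m_\Lambda$ of $m$ is characterised intrinsically (it is the image of $m$ in the largest quotient of $U(\wg_A)m$ with all simple subquotients in $\Lambda$, or dually), so independence of $\J$ is automatic and you do not need to appeal to the compatibility results you cite.
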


We want to recall a more detailed description of the restricted critical blocks. Denote by $\overline{\cdot}: \wh^* \rightarrow \h^*, \lam \mapsto \overline{\lam}$ the projection with respect to the decomposition $\wh = \h \oplus \C D \oplus \C K$ and denote by $\overline{\Lam} \subset \h^*$ the image of a subset $\Lam \subset \wh^*$.\\
For $\Lam \in \hc / \sim_A^\mathrm{res}$ we define the \textit{finite integral root system} (with respect to $\Lam$ and $A$) by 
$$R_A(\Lam) := \{\al \in R \, | \, 2(\lam+\rho+\tau| \al)_A \in \Z(\al| \al)_A \, \mathrm{for \,all} \, \lam \in \Lam \}$$
and the \textit{finite integral Weyl group} by
$$\W_A(\Lam) := \langle s_\al \, |\, \al \in R_A(\Lam)\rangle \subset \W$$

\begin{lemma}[\cite{3}, Lemma 5.3]\label{lemweyl}
Let $\Lam \in \hc / \sim_A^\mathrm{res}$ be a critical restricted equivalence class. Then, for all $\lam \in \Lam$
$$\overline{\Lam}=\W_A(\Lam)\cdot\overline{\lam}.$$
\end{lemma}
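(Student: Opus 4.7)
The plan is to establish the two inclusions separately, with the key ingredient being the compatibility of the dot action and the projection $\overline{\cdot}$ at the critical level.

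The key computation is as follows. For $\mu\in\hc$ and a real root $\al=\beta+n\delta$ with $\beta\in R$, $n\in\Z$, one has $(\al|\al)=(\beta|\beta)$ (since $(\beta|\delta)=(\delta|\delta)=0$) and $(\mu+\rho)(K)=0$, which together give
$$s_\al\cdot\mu \;=\; \mu-\langle\overline{\mu}+\overline{\rho},\beta^\vee\rangle(\beta+n\delta),$$
hence $\overline{s_\al\cdot\mu}=s_\beta\cdot\overline{\mu}$. The same computation with $\tau$ in place of $\rho$ (using $\tau(K)=0$) shows that the condition $\beta+n\delta\in\wR_A(\mu)$ is independent of $n$ and is equivalent to $\beta\in R_A(\Lam)$.

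For $\overline{\Lam}\subseteq\W_A(\Lam)\cdot\overline{\lam}$: since $\rp_A^\J(\mu)=P_A^\J(\mu)^\mathrm{res}$ is a quotient of $P_A^\J(\mu)$, restricted $A$-largeness implies ordinary $A$-largeness, so $\sim_A^\mathrm{res}$ refines the restriction of $\sim_A$ to $\hc$. The ordinary equivalence class of $\lam$ is generated by Verma composition factor relations, and by the affine Kac-Kazhdan theorem every such relation is realized by a chain of reflections $s_{\beta+n\delta}$ for real integral roots $\beta+n\delta$ together with $\delta$-shifts (stemming from the integrality of all imaginary roots at the critical level). Under $\overline{\cdot}$, the former become reflections $s_\beta\in\W_A(\Lam)$ by the key computation, while the latter collapse to the identity. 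Therefore $\overline{\Lam}\subseteq\W_A(\Lam)\cdot\overline{\lam}$.

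For $\W_A(\Lam)\cdot\overline{\lam}\subseteq\overline{\Lam}$: it suffices to produce, for each $\beta\in R_A(\Lam)$, some $\mu\in\Lam$ with $\overline{\mu}=s_\beta\cdot\overline{\lam}$. Picking any $n\in\Z$, set $\al=\beta+n\delta\in\wR_A(\lam)$ and $\mu=s_\al\cdot\lam$; then $\overline{\mu}=s_\beta\cdot\overline{\lam}$, and it remains to show $\mu\sim_A^\mathrm{res}\lam$. This is the main obstacle, since after quotienting by the Feigin-Frenkel relations one must still recover the affine Weyl linkage inside the restricted Verma modules. The cleanest route is subgeneric reduction: localize $A$ at a height-one prime $\p$ with $\beta^\vee\in\p$, so that $R_{A_\p}(\Lam)=\{\pm\beta\}$; invoke the explicit description of subgeneric restricted blocks from \cite{3}, Chapter 4, to exhibit $L_{A_\p}(\lam)$ as a composition factor of $\rp_{A_\p}^\J(\mu)$; and lift this relation back to $A$ via the base change statements of Theorem \ref{thm1} and Lemma \ref{lem3}, yielding $\mu\sim_A^\mathrm{res}\lam$.
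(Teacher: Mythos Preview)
The paper does not give its own proof of this lemma; it is quoted verbatim from \cite{3}, Lemma~5.3, with no argument supplied. So there is nothing to compare against here, but let me comment on your attempt.

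Your key computation and the inclusion $\overline{\Lam}\subseteq\W_A(\Lam)\cdot\overline{\lam}$ are fine: restricted linkage refines ordinary linkage, Kac--Kazhdan describes the latter, and under $\overline{\cdot}$ real reflections project to finite ones while $\delta$-shifts collapse.

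The reverse inclusion, however, has a real gap. Your ``subgeneric reduction'' asks to localize $A$ at a height-one prime $\p$ containing $\beta^\vee$, but $A$ is an \emph{arbitrary} local deformation algebra: it need not have any height-one primes at all (take $A$ a field), and even when it does, the image of $\beta^\vee$ in $A$ need not lie in one. More seriously, even when such a $\p$ exists, the direction of inference is wrong. The relation $\sim_A^{\mathrm{res}}$ is governed by composition multiplicities over the residue field $\K$ of $A$ (Theorem~\ref{thm1}), whereas $\sim_{A_\p}^{\mathrm{res}}$ is governed by the residue field of $A_\p$, which is $\mathrm{Frac}(A/\p)\neq\K$. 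Neither Theorem~\ref{thm1} nor Lemma~\ref{lem3} lets you transport a subquotient relation from $\overline{\OC}_{A_\p}$ back to $\overline{\OC}_A$; base change goes $A\to A_\p$, and blocks can only split under it, not merge. What is actually needed is a direct argument over $\K$ that $L_\K(\al\downarrow\lam)$ occurs in $\rdel_\K(\lam)$ for every $\al\in R_A(\Lam)$, which is how \cite{3} proceeds.
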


\subsection{The generic and subgeneric cases}\label{gsub}

For the calculation of the center we will need the description of the generic and subgeneric equivalence classes (cf. \cite{2}).\\
In the rest of this paper, $\st$ will be the localization of $S=S(\h)$ at the maximal ideal generated by $\h$. If $\p \subset \st$ is a prime ideal, we denote by $S_\p$ the localization of $\st$ at $\p$.\\
We need some more notation: Let $\Lam \in \hc/\sim_A^\mathrm{res}$. For a root $\al \in R_A(\Lam)$ and $\lam \in \Lam$ we define $\al \downarrow \lam$ (resp. $\al \uparrow \lam$) to be the element in the set $\{s_\al \cdot \lam, s_{-\al+\delta}\cdot \lam\}$ which is smaller (resp. larger) than or equal to $\lam$. We then define inductively $\al \downarrow^n \lam:=\al \downarrow(\al \downarrow ^{n-1}\lam)$ and $\al \uparrow^n \lam:=\al \uparrow(\al \uparrow ^{n-1}\lam)$.
 
\begin{definition}
Let $\Lam \in \hc / \sim_A^\mathrm{res}$ be a critical restricted equivalence class. We call $\Lam$
\begin{enumerate}
\item \textit{generic} if $\overline{\Lam} \subset \h^*$ consists of one element,
\item \textit{subgeneric} if $\overline{\Lam} \subset \h^*$ consists of two elements.
\end{enumerate}
\end{definition}

\begin{lemma}[\cite{3}, Lemma 5.5]\label{lemsub}
Let $\p \subset \st$ be a prime ideal of height one and let $\Lam \subset \hc$ be an equivalence class for $\sim_{S_\p}^\mathrm{res}$.
\begin{enumerate}
\item If $\al ^\vee \notin \p$ for all $\al \in R$, then $\Lam$ is generic.
\item If $\al ^\vee \in \p$ for some $\al \in R$, then $R_{S_\p}(\Lam) \subset \{\al, -\al\}$ and $\Lam$ is either generic or subgeneric.
\end{enumerate}
\end{lemma}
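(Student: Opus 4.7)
The plan is to reduce both claims to a computation of the finite integral root system $R_{S_\p}(\Lam) \subset R$, and then invoke Lemma \ref{lemweyl} to bound $|\overline{\Lam}|$. Indeed, Lemma \ref{lemweyl} identifies $\overline{\Lam}$ with the orbit $\W_{S_\p}(\Lam) \cdot \overline{\lam}$, and $\Lam$ is generic (resp. subgeneric) exactly when this orbit has one (resp. two) elements; so it is enough to show that $\W_{S_\p}(\Lam) = \langle s_\al \mid \al \in R_{S_\p}(\Lam) \rangle$ has order at most two under the hypothesis of the lemma.

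First I unpack the condition defining $R_{S_\p}(\Lam)$. Since $(\al|\al) \in \C^\times$ is a unit of $S_\p$, dividing through rewrites $2(\lam+\rho+\tau|\al)_{S_\p} \in \Z(\al|\al)_{S_\p}$ as the relation
\[
(\lam+\rho)(\al^\vee) + \tau(\al^\vee) \in \Z \quad \text{in } S_\p,
\]
where $\tau(\al^\vee)$ is the image of the linear form $\al^\vee \in \h \subset S$ under $S \to \st \to S_\p$. The decisive step is to observe that this can only be consistent when $\al^\vee \in \p$: the right-hand side is a specific constant in $S_\p$, and the element $\tau(\al^\vee) \in S_\p$ can match a constant modulo the maximal ideal $\p S_\p$ only if $\al^\vee$ reduces to zero there, i.e., $\al^\vee \in \p$.

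Part~(1) is then immediate. If no coroot $\al^\vee$ lies in $\p$, then no $\al$ satisfies the integrality condition, so $R_{S_\p}(\Lam) = \emptyset$, $\W_{S_\p}(\Lam) = \{1\}$, and Lemma \ref{lemweyl} yields $\overline{\Lam} = \{\overline{\lam}\}$: hence $\Lam$ is generic.

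For part~(2), suppose $\al \in R$ with $\al^\vee \in \p$. The ring $\st$ is a regular local ring, as a localization of a polynomial ring at a maximal ideal, hence a UFD, so the height-one prime $\p$ is principal: $\p = (f)$ for some irreducible $f$. Writing $\al^\vee = f g$ with $g \in \st$ and comparing graded components relative to the natural grading on $S$ forces $f$ to be a unit multiple of the linear form $\al^\vee$, so $\p = (\al^\vee)$. For any $\beta \in R$, this gives $\beta^\vee \in \p$ iff $\beta^\vee$ is a scalar multiple of $\al^\vee$, iff $\beta = \pm \al$. Therefore $R_{S_\p}(\Lam) \subset \{\al, -\al\}$, $\W_{S_\p}(\Lam) \subset \{1, s_\al\}$, and Lemma \ref{lemweyl} yields $|\overline{\Lam}| \leq 2$, so $\Lam$ is generic or subgeneric. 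The main technical obstacle is the analytic step in the second paragraph — matching the literal integrality condition in $S_\p$ against the vanishing locus of $\al^\vee$; once that link is established, the remainder is elementary commutative algebra in the regular local ring $\st$.
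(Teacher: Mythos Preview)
The paper does not give a proof of this lemma; it is quoted verbatim from \cite{3}, Lemma~5.5, so there is nothing in the present paper to compare your argument against.

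That said, your strategy is the natural one and is essentially correct: control $R_{S_\p}(\Lam)$ by showing that integrality of a finite root $\alpha$ forces $\alpha^\vee\in\p$, then use that $\st$ is a regular local ring (hence a UFD) to conclude $\p=(\alpha^\vee)$ and so $R_{S_\p}(\Lam)\subset\{\pm\alpha\}$; finally invoke Lemma~\ref{lemweyl}.

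One point deserves more care. Your sentence ``can match a constant modulo the maximal ideal $\p S_\p$'' mixes two readings. Taken literally in $S_\p$, the relation $(\lambda+\rho)(\alpha^\vee)+\tau(\alpha^\vee)\in\Z$ asks the degree-one element $\alpha^\vee\in S\hookrightarrow S_\p$ to equal a complex number, which never happens for any finite root (so the literal condition would give $R_{S_\p}(\Lam)=\emptyset$ always, which is too strong). The condition that is actually meant---and the one that makes Lemma~\ref{lemweyl} and the block decomposition work---is the same relation read in the residue field $\K=S_\p/\p S_\p$, i.e.\ $\langle\lambda+\rho,\alpha^\vee\rangle+\overline{\alpha^\vee}\in\Z$ inside $\K$. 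With that reading your argument is clean: $\overline{\alpha^\vee}$ lies in $\C\subset\K$ if and only if some $\alpha^\vee-c$ with $c\in\C$ belongs to $\p$, and since the generator of $\p$ lies in the maximal ideal $\h\cdot\st$ this forces $c=0$, hence $\alpha^\vee\in\p$. You should make this passage to the residue field explicit rather than leaving it in the phrase ``modulo the maximal ideal''.

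Finally, note that in part~(2) you only obtain $R_{S_\p}(\Lam)\subset\{\alpha,-\alpha\}$, not equality: whether $\alpha$ actually lies in $R_{S_\p}(\Lam)$ still depends on $\langle\lambda+\rho,\alpha^\vee\rangle\in\Z$, which is why both the generic and the subgeneric alternatives remain possible, exactly as the statement asserts.
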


We finish this chapter with the main results of \cite{3}.

\begin{thm}[\cite{3}, Theorem 5.6]\label{thm3}
Let $\Lam \in \hc / \sim_A^\mathrm{res}$, $\lam \in \Lam$ and $\J \subset \hc$ an open and bounded subset.
\begin{enumerate}
\item Suppose that $\Lam$ is generic. Then $\Lam$ consists of one element and
$$\rp_A^\J(\lam) \cong \rdel_A(\lam)$$
if $\lam \in \J$.
\item Suppose that $\Lam$ is subgeneric and that $\overline{\Lam}=\{\overline{\lam}, s_\al \cdot \overline{\lam}\}$ for some $\al \in R$. Then there is a non-split short exact sequence
$$0\rightarrow \rdel_A(\al \uparrow \lam) \rightarrow \rp_A^\J(\lam) \rightarrow \rdel_A(\lam) \rightarrow 0$$
and a short exact sequence
$$0\rightarrow L_A(\al \downarrow \lam) \rightarrow \rdel_A(\lam) \rightarrow L_A(\lam) \rightarrow 0$$
if $\J$ contains $\lam$ and $\al \uparrow  \lam$.
\item If $\Lam$ is subgeneric with $R_A(\Lam)=\{\pm \al\}$ and $\lam \in \Lam$, we have 
$$\Lam = \{...,\al \downarrow^2 \lam, \al \downarrow \lam, \lam, \al \uparrow \lam, \al \uparrow^2 \lam,...\}.$$
\end{enumerate}
\end{thm}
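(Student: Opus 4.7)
The plan is to pass first to the residue field $\K$, pin down the simple composition series of restricted Verma modules there, and then lift everything to $A$ by combining the freeness and base-change properties of restricted Vermas in Lemma \ref{lem2} with the restricted-Verma-flag formula for $\rp_A^\J(\mu)$ in Theorem \ref{thm1}.

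For part (1), genericity together with Lemma \ref{lemweyl} forces $\W_A(\Lam) \cdot \overline{\lam} = \{\overline{\lam}\}$, i.e., no finite integral reflection moves $\overline{\lam}$. I would then argue that in this situation $\rdel_\K(\lam)$ is already simple, so $\rdel_\K(\lam) \cong L_\K(\lam)$. Granted this, Theorem \ref{thm1} yields $(\rp_A^\J(\lam) : \rdel_A(\mu)) = \delta_{\mu,\lam}$ and hence $\rp_A^\J(\lam) \cong \rdel_A(\lam)$. The same simplicity prevents $\lam$ from being restricted $A$-equivalent to any other weight of $\hc$, and so $\Lam = \{\lam\}$.

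For part (2), the central local input I would establish over $\K$ is the two-step composition series
\begin{equation*}
0 \longrightarrow L_\K(\al \downarrow \lam) \longrightarrow \rdel_\K(\lam) \longrightarrow L_\K(\lam) \longrightarrow 0
\end{equation*}
for subgeneric $\lam$. By Lemma \ref{lem2}(1), each weight space of $\rdel_A(\lam)$ is free of rank equal to the corresponding $\K$-dimension, so this $\K$-sequence lifts by a weight-space rank count to the asserted short exact sequence over $A$. Feeding this back into Theorem \ref{thm1} then yields $(\rp_A^\J(\lam) : \rdel_A(\mu)) = 1$ exactly for $\mu \in \{\lam, \al \uparrow \lam\}$ and zero otherwise; ordering the Verma flag so that $\rdel_A(\al \uparrow \lam)$ (with the larger highest weight) occurs as a submodule produces the asserted exact sequence for $\rp_A^\J(\lam)$, and non-splitness is forced by the indecomposability of the projective cover.

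For part (3), the length-two structure of $\rdel_A(\mu)$ from part (2) shows directly that both $\al \downarrow \mu$ and $\al \uparrow \mu$ are restricted $A$-larger-related to $\mu$ for every $\mu \in \Lam$, so iterating yields
\begin{equation*}
\{\ldots, \al \downarrow^2 \lam, \al \downarrow \lam, \lam, \al \uparrow \lam, \al \uparrow^2 \lam, \ldots\} \subseteq \Lam.
\end{equation*}
Conversely, with $R_A(\Lam) = \{\pm \al\}$ the only new simple factors produced by the subgeneric restricted Verma analysis lie in this $\al$-orbit, so no other weight of $\hc$ can be $\sim_A^{\mathrm{res}}$-connected to $\lam$. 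The main obstacle in this whole plan is establishing the generic simplicity and the subgeneric length-two structure of $\rdel_\K(\lam)$: these rely on genuine input about composition factors of critical-level Verma modules (of Feigin--Frenkel / Kac--Kazhdan type) and are the substantive, non-formal content behind the theorem; everything else is a descent from $\K$ to $A$ via Lemma \ref{lem2} (and Lemma \ref{lem3} on the projective side) together with the Verma-flag multiplicity formula of Theorem \ref{thm1}.
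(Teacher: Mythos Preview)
This theorem is not proved in the paper at all: it is quoted from \cite{3}, Theorem~5.6, and the only argument the paper adds is the one-line remark after the statement that the second short exact sequence in (2) follows from the first one together with BGGH-reciprocity (Theorem~\ref{thm1}). So there is no substantive in-paper proof to compare your proposal against.

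That said, your sketch inverts the logical order the paper indicates. The paper (following \cite{3}) treats the non-split extension $0\to\rdel_A(\al\uparrow\lam)\to\rp_A^\J(\lam)\to\rdel_A(\lam)\to0$ as the primary input and deduces the composition series of $\rdel_A(\lam)$ from it via BGGH, whereas you want to establish the length-two structure of $\rdel_\K(\lam)$ first and then read off the restricted Verma flag of $\rp_A^\J(\lam)$ from Theorem~\ref{thm1}. The two directions are formally equivalent once either statement is known, and you correctly identify that the hard, non-formal content is precisely the generic simplicity and subgeneric length-two structure of $\rdel_\K(\lam)$; this is what \cite{3} supplies (using the Kac--Kazhdan description of Verma composition factors together with the action of the Feigin--Frenkel center) and is not reproved here.

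One genuine gap in your outline: the lifting step in part (2), ``this $\K$-sequence lifts by a weight-space rank count to the asserted short exact sequence over $A$'', is not justified by Lemma~\ref{lem2}(1) alone. Freeness of the weight spaces of $\rdel_A(\lam)$ does not by itself produce a submodule isomorphic to $L_A(\al\downarrow\lam)$, since you have no freeness or rank statement for the weight spaces of the simple modules $L_A(\mu)$ over a general local deformation algebra $A$. One needs to know that the maximal submodule of $\rdel_A(\lam)$ is again a highest-weight module with the correct character, and that is part of the structural input coming from \cite{3} rather than a consequence of rank matching.
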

The second short exact sequence in (2) follows from the first one and BGGH-reciprocity.

\section{The center of $\ocs$ for $\Lam$ critical}

Recall that the center of a category is the ring of endo-transformations of the identity functor. Let $A$ be a deformation algebra and fix a critical equivalence class $\Lam \in \hc / \sim_{A}^{\mathrm{res}}$. For an open, bounded subset $\J \subset \hc$ denote by $\mathcal{Z}_A(\Lam, \J)$ the center of $\overline{\OC}_{A,\Lam}^\J$ and by $\mathcal{Z}_A(\Lam)$ the center of $\overline{\OC}_{A,\Lam}$. We first consider the case $A=\st$, the localization of $S$ at $S\h$. Thus, we have
$$\cent := \mathcal{Z}(\ocs^\J) = \End (\mathrm{id}_{\ocs^\J})$$

Since in general we only have enough projective objects in the truncated categories, we have to express the center of $\ocs$ as a limit of the centers $\mathcal{Z}_A(\Lam, \J)$ which runs over open and bounded subsets $\J \subset \hc$. The main result is then
\begin{thm}
Let  $\st$ be the localization of $S$ at the maximal ideal generated by $\h$ and $\Lam \in \hc/ \sim_{\st}^\mathrm{res}$. Then we have an isomorphism of $\st$-modules
$$\cen \cong \left \{ (z_\mu)_{\mu \in \Lam} \in \prod_{\substack{\mu \in \Lam}} \st \, \middle| \, z_\mu \equiv z_{\al \downarrow \mu} \, (\mathrm{mod}\, \al ^\vee) \, \forall \, \al \in R_{\st}(\Lam) \right \}$$
\end{thm}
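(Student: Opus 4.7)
The approach follows Fiebig's strategy from \cite{4}. Since projective covers exist only in truncated subcategories (Theorem \ref{thm1}), I would first identify $\cen$ with the inverse limit $\varprojlim_\J \cent$ over open bounded subsets $\J \subset \hc$, with transition maps induced by the truncation functors and Theorem \ref{thmpr}(2). For a fixed such $\J$, take the projective generator $P_\J := \bigoplus_{\mu \in \Lam \cap \J} \rp_\st^\J(\mu)$ of $\ocs^\J$. A central element $z$ acts on each restricted Verma $\rdel_\st(\mu)$ by a scalar $z_\mu \in \st$, since $\rdel_\st(\mu)$ is cyclically generated by its rank-one highest weight space, so $\End(\rdel_\st(\mu)) = \st$; applying naturality to the surjection $\rp_\st^\J(\mu) \twoheadrightarrow \rdel_\st(\mu)$ and to the inclusions in a restricted Verma flag of $\rp_\st^\J(\mu)$, and using the freeness of weight spaces (Lemma \ref{lem2}(1)), one obtains an injection $z \mapsto (z_\mu)_\mu$ from $\cent$ into $\prod_{\mu \in \Lam \cap \J} \st$.

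To identify the image I would localize at each height-one prime $\p \subset \st$, using Lemma \ref{lem3} and the base change properties of restricted Vermas and projective covers to reduce to computing $\mathcal{Z}_{S_\p}(\Lam, \J)$. By Lemma \ref{lemsub}, only the primes $\p = (\al^\vee)$ for $\al \in R_\st(\Lam)$ contribute nontrivial relations; over each such $S_\p$, the class $\Lam$ decomposes into subgeneric $\al$-chains $\{\ldots, \al \downarrow \mu, \mu, \al \uparrow \mu, \ldots\}$ by Theorem \ref{thm3}(3), and the non-split short exact sequence
$$0 \to \rdel_{S_\p}(\al \uparrow \mu) \to \rp_{S_\p}^\J(\mu) \to \rdel_{S_\p}(\mu) \to 0$$
from Theorem \ref{thm3}(2), together with a short $\Ext^1$-calculation in the subgeneric case, shows that a pair $(z_\mu, z_{\al \uparrow \mu})$ glues to an endomorphism of the middle term if and only if the two scalars agree modulo $\al^\vee$ in $S_\p$. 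Since $\st$ is a regular local domain and the $(\al^\vee)$ are precisely the relevant height-one primes, this local agreement is equivalent to $z_\mu \equiv z_{\al \uparrow \mu} \pmod{\al^\vee}$ in $\st$; relabeling via $\al \uparrow (\al \downarrow \mu) = \mu$ then produces the stated congruences.

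For the converse, given a tuple $(z_\mu)$ satisfying all the congruences, I would construct an endomorphism $f_\mu^\J \in \End(\rp_\st^\J(\mu))$ realizing the prescribed scalars on the restricted Verma subquotients by inductive lifting along a restricted Verma flag; the congruence modulo $\al^\vee$ is exactly the obstruction to each lifting step, so its vanishing furnishes the required lifts. The main obstacle is ensuring that the family $(f_\mu^\J)_\mu$ is \emph{coherent}, that is, assembles into a genuine natural transformation of $\mathrm{id}_{\ocs^\J}$ rather than a collection of unrelated endomorphisms. I would address this by first producing the transformation locally at each $(\al^\vee)$ via Theorem \ref{thm3}, then patching using $\st = \bigcap_{\mathrm{ht}(\p) = 1} S_\p$ together with Lemma \ref{lem3}, the injectivity of the preceding step controlling the uniqueness of the patch. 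Finally, passing to the inverse limit over $\J$ and observing that $\bigcup \J = \hc$ exhausts $\Lam$ yields the desired description of $\cen$.
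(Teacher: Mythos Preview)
Your overall strategy matches the paper's almost exactly: inject the center into $\prod_\mu \st$ via evaluation on restricted Vermas, localize at height-one primes, invoke Lemma~\ref{lemsub} so that only primes $\p=(\al^\vee)$ with $\al\in R_{\st}(\Lam)$ contribute constraints, compute the subgeneric center, and then recover $\cen$ as the intersection $\bigcap_{\mathrm{ht}\,\p=1}\mathcal{Z}_{S_\p}(\Lam)$ inside $\prod_\mu Q$.

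The one place where your outline glosses over the real work is the sentence ``together with a short $\Ext^1$-calculation \dots\ shows that a pair $(z_\mu,z_{\al\uparrow\mu})$ glues \dots\ if and only if the two scalars agree modulo $\al^\vee$.'' The ``only if'' direction is easy (non-splitness gives $n\ge 1$), but the ``if'' direction amounts to showing that the extension class in $\Ext^1_{S_\al}(\rdel_{S_\al}(\mu),\rdel_{S_\al}(\al\uparrow\mu))$ is annihilated by $\al^\vee$ and not merely by $(\al^\vee)^2$. This is precisely the content of Proposition~\ref{prop1}, and the paper proves it by constructing an explicit nilpotent endomorphism $n_\lam\in\End(\rp_{S_\al}(\lam))$ and then using the Jantzen filtration of $\rdel_\K(\al\uparrow\lam)$ (Lemma~\ref{lemjf}) to force $n_\lam|_{\rdel_{S_\al}(\al\uparrow\lam)}=\al^\vee\cdot\mathrm{id}$ rather than $(\al^\vee)^n\cdot\mathrm{id}$ with $n\ge 2$. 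No purely formal $\Ext^1$ manipulation will produce this exponent; you need representation-theoretic input equivalent to the two-step Jantzen filtration. You should flag this as the key technical ingredient rather than a routine step.

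On a smaller point: your separate ``converse'' paragraph (inductive lifting along a restricted Verma flag, then patching for coherence) is more elaborate than necessary. In the subgeneric case over $S_\al$ the paper simply writes down explicit bases $\{\mathrm{id},n_\lam\}$ of $\End(\rp_{S_\al}(\lam))$ and $\{a_\lam\}$, $\{b_\lam\}$ of the rank-one Hom-spaces between neighbouring projectives, checks the two commutation identities of Lemma~4.11, and reads off $\mathcal{Z}_{S_\al}(\Gamma)$ directly from the description in Remark~\ref{rmk2}. Since weight spaces are free over $S_\al$, any endomorphism of $\rp_{S_\al}(\lam)$ acting by prescribed scalars on the two Verma subquotients is unique (it is determined after $\otimes\,Q$), so coherence is automatic once existence is known, and no inductive lifting is required.
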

We use Lemma \ref{lem3} and a localization process to split the problem into generic and subgeneric situations. Following \cite{4}, we first relate the center to the endomorphism rings of a generating set of restricted projective objects.
\begin{remark}\label{rmk2}
Let $A$ be a localization of $\st$ at a prime ideal $\p \subset \st$ and $\Gamma$ an equivalence class under $\sim_{A}^{\mathrm{res}}$. The block $\oca^\J$ is generated by the set of projective covers $\{\rp^\J_A(\lam)\}_{\lam \in \Gamma\cap\J}$. By the same arguments as given in \cite{4}, chapter 3.1, we get that evaluating on indecomposable projective objects induces an injective map
$$\centa \hookrightarrow \prod_{\substack{\mu \in \Gamma\cap\J}} \End_{\overline{\OC}_A}( \rp^\J_A(\mu))$$

and the image of this map is given by the subset

\begin{equation*}
\begin{split}
{\Biggl\{}(z_\mu)_{\mu \in \Gamma\cap\J} \in \prod_{\substack{\mu \in \Gamma\cap\J}}\End_{\overline{\OC}_A}( \rp^\J_A(\mu))  {\Biggl|} z_{\mu} \circ f = f \circ z_\lam \,\\ \forall \, \lam, \mu \in \Gamma\cap\J, \, f \in \Hom_{\overline{\OC}_A} (\rp^\J_A (\lam), \rp^\J_A(\mu))  {\Biggl \}}
\end{split}
\end{equation*}
\end{remark}

Let $A\rightarrow A'$ be a homomorphism of local deformation algebras. The base change, Lemma \ref{lem3}, for the endomorphism rings of restricted projective objects induces a map
$$\centa \longrightarrow \mathcal{Z}_{A'}(\Gamma, \J).$$

For another open, bounded subset $\J'\subset \wh^*$ with $\J \subset \J'$ the natural maps $\rp_A^{\J'}(\mu) \rightarrow \rp_A^{\J}(\mu)$ for $\mu \in \Gamma \cap \J'$ induce a map $\mathcal{Z}_A(\Gamma, \J') \rightarrow \centa$. Since all finitely generated modules of $\oca$ already lie in a subcategory $\oca^\J$ for a certain open and bounded subset $\J \subset \wh^*$, and since the center is already uniquely defined by its action on the finitely generated objects, we have
$$\mathcal{Z}_{A}(\Gamma) \cong \lim_{\stackrel{\longleftarrow}{\J}} \centa$$
The base change maps $\centa \rightarrow \mathcal{Z}_{A'}(\Gamma, \J)$ then induce a base change map $\mathcal{Z}_A(\Gamma) \rightarrow \mathcal{Z}_{A'}(\Gamma)$.

\begin{lemma}
Let $A$ be a localization of $\st$ at a prime ideal $\p \subset \st$. The evaluation on restricted Verma modules induces an injective map
$$\mathcal{Z}_A(\Gamma) \hookrightarrow \prod_{\substack{\mu \in \Gamma}}\End_{\overline{\OC}_A}( \rdel_A(\mu)) \cong \prod_{\substack{\mu \in \Gamma}} A$$
\end{lemma}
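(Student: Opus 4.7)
First I would identify $\End_{\overline{\OC}_A}(\rdel_A(\mu)) \cong A$. By Lemma \ref{lem2}(1), the weight space $\rdel_A(\mu)_\mu$ is a free $A$-module of rank one; since $\rdel_A(\mu)$ is a cyclic highest weight module, it is generated over $\wg_A$ by a basis vector $v_\mu$ of this weight space. Any endomorphism is $A$-linear and weight-preserving, hence sends $v_\mu$ to $a\cdot v_\mu$ for some $a \in A$, and every such scalar defines an endomorphism by extension. Under this identification, the evaluation map in the lemma sends a central natural transformation $z$ to the tuple $(z_{\rdel_A(\mu)})_{\mu\in\Gamma}$ of scalars in $A$.

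For injectivity, using $\mathcal{Z}_A(\Gamma) \cong \varprojlim_\J \mathcal{Z}_A(\Gamma, \J)$, it suffices to treat each truncated center. Composing the embedding from Remark \ref{rmk2} with the morphism $\prod_\mu \End(\rp_A^\J(\mu)) \to \prod_\mu \End(\rdel_A(\mu))$ induced by the top-Verma surjections $p_\mu : \rp_A^\J(\mu) \twoheadrightarrow \rdel_A(\mu)$ (which exist and are essentially unique since Theorem \ref{thm1} yields $(\rp_A^\J(\mu) : \rdel_A(\mu)) = 1$ with $\rdel_A(\mu)$ sitting on top), the map of the lemma factors through $\mathcal{Z}_A(\Gamma, \J) \to \prod_{\mu \in \Gamma \cap \J} A$, and it is the injectivity of this composite that must be established.

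I would proceed by descending induction on $\mu$ in the finite, bounded-above poset $\Gamma \cap \J$. For maximal $\mu$, Theorem \ref{thm1} forces $\rp_A^\J(\mu) \cong \rdel_A(\mu)$ and the statement is tautological. For the inductive step, assume $z_\lam = 0$ on $\rp_A^\J(\lam)$ for all $\lam > \mu$ in $\Gamma \cap \J$. Lifting each layer of the restricted Verma flag of $\rp_A^\J(\mu)$ via projectivity produces morphisms $f : \rp_A^\J(\lam) \to \rp_A^\J(\mu)$ whose images span $\ker p_\mu$. By the commutativity $z_\mu \circ f = f \circ z_\lam = 0$ from Remark \ref{rmk2}, we deduce $z_\mu|_{\ker p_\mu} = 0$. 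Combined with the hypothesis forcing $\operatorname{Im}(z_\mu) \subseteq \ker p_\mu$, one obtains a factorization $z_\mu = i \circ \tilde{\rho} \circ p_\mu$ for some $\tilde{\rho} \in \Hom_{\overline{\OC}_A}(\rdel_A(\mu), \ker p_\mu)$, where $i$ denotes the inclusion of $\ker p_\mu$.

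The main obstacle is then to rule out $\tilde{\rho} \neq 0$: this is subtle because $\ker p_\mu$ admits $\rdel_A(\lam)$-subquotients with $\lam > \mu$ into which nonzero Verma embeddings from $\rdel_A(\mu)$ do exist (for instance in the subgeneric case of Theorem \ref{thm3}(2)). To close the argument, I would exploit that $z_\mu$ lies in the center of $\End(\rp_A^\J(\mu))$ by the $\lam = \mu$ case of Remark \ref{rmk2} and thus commutes with every self-endomorphism; combining this constraint with Lemma \ref{lem3} to apply base change to further localizations of $\st$ at height-one primes, one reduces via Lemma \ref{lemsub} to the generic and subgeneric equivalence classes, where $\tilde{\rho} = 0$ can be verified directly from the explicit structure given in Theorem \ref{thm3}.
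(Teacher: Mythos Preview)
Your induction and factorisation $z_\mu = i \circ \tilde{\rho} \circ p_\mu$ are set up correctly, but the final step does not close. In the subgeneric case Theorem \ref{thm3} does \emph{not} give $\Hom_{\overline{\OC}_A}(\rdel_A(\mu),\ker p_\mu)=0$: here $\ker p_\mu \cong \rdel_A(\al\uparrow\mu)$, and a nonzero map $\rdel_A(\mu)\to\rdel_A(\al\uparrow\mu)$ exists --- the paper itself uses precisely such a map later to build the endomorphism $n_\lambda$. Moreover, the constraint you invoke, that $z_\mu$ lies in the center of $\End(\rp_A^\J(\mu))$, is vacuous in this situation: the subgeneric endomorphism ring is commutative (it is free of rank $2$ over $S_\al$ on $\{\mathrm{id},n_\mu\}$), so every element is central. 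To actually force $\tilde{\rho}=0$ one would need the compatibility with maps between \emph{different} projectives together with the detailed structure of those Hom spaces in the subgeneric case, and that structure (Proposition \ref{prop1} and the surrounding lemmas) is only developed \emph{after} the present lemma.

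The paper's proof avoids all of this by a single base change to the quotient field $Q=\mathrm{Quot}(A)$ rather than to height-one localisations. Over $Q$ every restricted equivalence class is generic (Theorem \ref{thm3}(1)), so the restricted Verma modules are themselves the indecomposable projectives and Remark \ref{rmk2} gives $\mathcal{Z}_Q(\Gamma)\stackrel{\sim}{\to}\prod_{\mu\in\Gamma}Q$. Since $A$ is a domain the vertical maps in the square
\[
\begin{CD}
\mathcal{Z}_A(\Gamma) @>>> \prod_{\mu\in\Gamma} A\\
@VVV @VVV\\
\mathcal{Z}_Q(\Gamma) @>\sim>> \prod_{\mu\in\Gamma} Q
\end{CD}
\]
are injective, whence the top map is injective. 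If you wanted to rescue your approach, base changing to $Q$ instead of to height-one primes would immediately yield $\tilde{\rho}\otimes_A Q=0$ (since $\Hom_Q(\rdel_Q(\mu),\rdel_Q(\lambda))=0$ for $\lambda\neq\mu$), and $\ker p_\mu$ being $A$-torsion-free then gives $\tilde{\rho}=0$; but at that point the descending induction is superfluous and you have essentially recovered the paper's two-line argument.
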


\begin{proof}
Let $Q:= Q(A)$ be the quotient field of $A$. Then by Theorem \ref{thm3} (1), all Verma modules $\rdel_Q(\lam)$ are projective and we have  $\Hom_{\overline{\OC}_Q}(\rdel_Q(\mu), \rdel_Q(\lam)) = 0$ for $\mu \neq \lam$. But by the description of the center from above and by base change for the center we get a commutative diagram

\begin{eqnarray*}
	\begin{CD}
   \mathcal{Z}_A(\Gamma)   @>>> \prod_{\substack{\mu \in \Gamma}}\End_{\overline{\OC}_A}( \rdel_A(\mu)) \cong \prod_{\substack{\mu \in \Gamma}} A\\
   @VV V @VVV\\
  \mathcal{Z}_Q(\Gamma) @>\sim >> \prod_{\substack{\mu \in \Gamma}}\End_{\overline{\OC}_Q}( \rdel_Q(\mu)) \cong \prod_{\substack{\mu \in \Gamma}} Q
	\end{CD}
\end{eqnarray*}

where the lower horizontal is an isomorphism and the verticals are injective. But then the upper horizontal is injective as well. 
\end{proof}

\begin{remark}
We want to describe the image of the map
$$\mathcal{Z}_{\st}(\Gamma) \hookrightarrow \prod_{\substack{\mu \in \Gamma}}\st$$

The strategy to describe the image of this map is to localize all appearing modules at prime ideals $\p$ of height one. So let $\p\subset \st$ be such an ideal. If $\al^\vee \notin \p$ for all $\al \in R_{\st}(\Gamma)$, all $\rdel_{S_\p}(\mu)$ are projective and we get $\mathcal{Z}_{S_\p}(\Gamma) \stackrel{\sim}\rightarrow \prod_{\substack{\mu \in \Gamma}} S_\p$ which is the generic situation. We will deal with the subgeneric case in the next chapter.
\end{remark}

\subsection{The subgeneric case}

Let $S_\al$ be the localization of $\widetilde{S}$ at the prime ideal generated by $\al ^\vee$. We fix an equivalence class $\Gamma \subset \hc$ under $\sim_{S_\al}^\mathrm{res}$ which is not generic. For $\lam \in \Gamma$ we get, by Lemma \ref{lemsub} and Theorem \ref{thm3} (3), $\Gamma= \wW_\al \cdot \lam=\{...,\al \downarrow \lam, \lam, \al \uparrow \lam, \al \uparrow^2 \lam,...\}$, where $\wW_\al\subset \wW$ is the affine subgroup generated by the reflections $s_{\al +n \delta}$ with $n \in \Z$. Recall that by Theorem \ref{thm3} we have a short exact sequence
$$\rdel_{S_\al}(\al \uparrow \lam) \hookrightarrow \rp_{S_\al}^\J(\lam) \twoheadrightarrow \rdel_{S_\al}(\lam)$$
if $\al \uparrow \lam, \lam \in \J$. This implies that for $\J'\supset \J$ we have $\rp_{S_\al}^{\J'}(\lam)\cong \rp_{S_\al}^\J(\lam)$ in $\overline{\OC}_{S_\al}$. Thus, for any $\mu \in \Gamma$ we will always assume that the corresponding open, bounded subset $\J_\mu$ is big enough, such that we can write $\rp_{S_\al}(\mu)=\rp_{S_\al}^{\J_\mu}(\mu)$.

\begin{lemma}
Restriction to the restricted Verma module $\rdel_{S_\al}(\al \uparrow \mu) \hookrightarrow \rp_{S_\al}(\mu)$ for every $\mu \in \Gamma$ induces a surjective map
\begin{displaymath}
\begin{array}{ccc}
\prod_{\substack{\mu\in \Gamma}} \End_{\overline{\OC}_{S_\al}}(\rp_{S_\al}(\mu)) & \twoheadrightarrow & \prod_{\substack{\mu\in \Gamma}} \End_{\overline{\OC}_{S_\al}}(\rdel_{S_\al}(\mu)) \\
(f_\mu)_{\mu \in \Gamma} & \mapsto & (f_{\mu} |_{\rdel_{S_\al}(\al \uparrow \mu)})_{\mu \in \Gamma}
\end{array}
\end{displaymath}
\end{lemma}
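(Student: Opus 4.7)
The plan is to reduce surjectivity to the trivial fact that $S_\al$-scalar multiplication extends from a submodule to the ambient module. First, I would identify the endomorphism rings on the right hand side: for every $\nu \in \Gamma$ the restricted Verma module $\rdel_{S_\al}(\nu)$ is cyclic, generated by a highest weight vector $v_\nu$ of weight $\nu$, and by Lemma \ref{lem2}(1) the weight space $\rdel_{S_\al}(\nu)_\nu$ is free of rank one over $S_\al$. Hence any endomorphism must send $v_\nu$ to $s \cdot v_\nu$ for some $s \in S_\al$ and is determined by this scalar, so $\End_{\overline{\OC}_{S_\al}}(\rdel_{S_\al}(\nu)) \cong S_\al$.

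Next I would check that the restriction map is well defined. By Theorem \ref{thm3}(2) we have a short exact sequence $0 \to \rdel_{S_\al}(\al \uparrow \mu) \to \rp_{S_\al}(\mu) \to \rdel_{S_\al}(\mu) \to 0$, from which one reads off that the highest weight appearing in $\rp_{S_\al}(\mu)$ is $\al \uparrow \mu$, that the $(\al \uparrow \mu)$-weight space of $\rp_{S_\al}(\mu)$ coincides with that of the submodule (since $\rdel_{S_\al}(\mu)$ has no weight $\al \uparrow \mu$), and that $\rdel_{S_\al}(\al \uparrow \mu)$ is precisely the submodule of $\rp_{S_\al}(\mu)$ generated by this highest weight space. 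Because any endomorphism of a weight module preserves each weight space, it preserves the submodule generated by it, so the restriction is a well defined element of $\End_{\overline{\OC}_{S_\al}}(\rdel_{S_\al}(\al \uparrow \mu))$; after the bijective re-indexing $\mu \mapsto \al \uparrow \mu$ of $\Gamma$, this matches the $\mu$-component of the target.

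For the surjectivity itself, given a family $(g_\mu)_{\mu \in \Gamma}$ on the right, the identification of step one lets me write $g_{\al \uparrow \mu} = s_{\al \uparrow \mu} \cdot \mathrm{id}$ for a unique $s_{\al \uparrow \mu} \in S_\al$. Setting $f_\mu := s_{\al \uparrow \mu} \cdot \mathrm{id}_{\rp_{S_\al}(\mu)}$ produces an endomorphism whose restriction to $\rdel_{S_\al}(\al \uparrow \mu)$ is $s_{\al \uparrow \mu} \cdot \mathrm{id} = g_{\al \uparrow \mu}$, since $S_\al$-scalar multiplication commutes with the inclusion of any submodule. There is essentially no obstacle in the argument; the only point requiring some care is the bookkeeping with the $\al \uparrow$-shift in the indexing, since the restriction of an endomorphism of $\rp_{S_\al}(\mu)$ naturally sits in $\End_{\overline{\OC}_{S_\al}}(\rdel_{S_\al}(\al \uparrow \mu))$ rather than in $\End_{\overline{\OC}_{S_\al}}(\rdel_{S_\al}(\mu))$.
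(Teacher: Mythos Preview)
Your proposal is correct and follows exactly the paper's approach: both arguments rest on the observation that $\End_{\overline{\OC}_{S_\al}}(\rdel_{S_\al}(\nu)) = S_\al \cdot \mathrm{id}$, so that any endomorphism of the Verma submodule is a scalar and hence lifts to scalar multiplication on the projective cover. You have simply added the (routine) verification of well-definedness and the careful bookkeeping of the $\al\uparrow$-shift, which the paper leaves implicit.
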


\begin{proof}
Since $\End_{\overline{\OC}_{S_\al}}(\rdel_{S_\al}(\mu)) = S_\al \cdot \mathrm{id}_{\rdel_{S_\al}(\mu)}$, every endomorphism of the restricted Verma module lifts to an endomorphism of $\rp_{S_\al}(\al \downarrow\mu)$.
\end{proof}

Identifying $\prod_{\substack{\mu \in \Gamma}} \End_{\overline{\OC}_{S_\al}}(\rdel_{S_\al}(\mu))$ with $\prod_{\substack{\mu \in \Gamma}} S_{\al}$ and using the natura\-lity of the action of the center, we get a commutative diagram\\

\begin{displaymath}
    \xymatrix{
       \mathcal{Z}_{S_\al}(\Gamma)  \ar@{^{(}->}[dr] \ar@{^{(}->}[rr] &   &  \prod_{\substack{\mu \in \Gamma}} S_{\al} \\
                                & \prod_{\substack{\mu \in \Gamma}} \End_{\overline{\OC}_{S_\al}}(\rp_{S_\al}(\mu)) \ar@{->>}[ur]&  }
\end{displaymath}
The aim is now to describe the image of the composition of the down- with the up-going arrow in this diagram.

\begin{prop}
Let $\lam, \mu \in \hc$, then $\Hom_{\overline{\OC}_{S_\al}}(\rp_{S_\al}(\mu),\rp_{S_\al}(\lam))$ is a free $S_\al$-module and we have
\begin{equation*}
\mathrm{rk}_{S_\al} \Hom_{\overline{\OC}_{S_\al}}(\rp_{S_\al}(\mu),\rp_{S_\al}(\lam))=
\begin{cases}
1, \mathrm{if} \, \mu = \al \downarrow \lam \, \mathrm{or} \, \mu =\al \uparrow \lam \\
2, \mathrm{if} \, \mu = \lam \\
0,  \, \mathrm{otherwise} \\
\end{cases}
\end{equation*}
\end{prop}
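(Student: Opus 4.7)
The strategy is to base-change to the quotient field $Q=Q(S_\al)$, where the subgeneric class $\Gamma$ decomposes into generic one-element classes and the computation reduces to the generic case. Applying the exact functor $\cdot\otimes_{S_\al} Q$ to the short exact sequence from Theorem \ref{thm3} (2) and using base change for restricted Vermas produces
$$0\to \rdel_Q(\al\uparrow\mu)\to \rp_{S_\al}(\mu)\otimes_{S_\al} Q\to \rdel_Q(\mu)\to 0.$$
Over $Q$ every coroot is a unit, so the same argument as in Lemma \ref{lemsub} (1) shows that $\mu$ and $\al\uparrow\mu$ lie in distinct generic one-element $\sim_Q^{\mathrm{res}}$-classes, and hence in orthogonal blocks of $\overline{\OC}_{Q,\mathrm{crit}}$. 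The sequence therefore splits canonically along the block decomposition, giving
$$\rp_{S_\al}(\mu)\otimes_{S_\al} Q \;\cong\; \rdel_Q(\mu)\oplus \rdel_Q(\al\uparrow\mu).$$

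By Theorem \ref{thm3} (1), each $\rdel_Q(\nu)$ is the projective generator of its own one-element block over $Q$, so $\Hom_{\overline{\OC}_Q}(\rdel_Q(\nu),\rdel_Q(\nu'))$ vanishes for $\nu\neq\nu'$ and is a one-dimensional $Q$-vector space for $\nu=\nu'$. Lemma \ref{lem3} then yields
$$\Hom_{\overline{\OC}_{S_\al}}(\rp_{S_\al}(\mu),\rp_{S_\al}(\lam))\otimes_{S_\al} Q \;\cong\; \Hom_{\overline{\OC}_Q}(\rp_{S_\al}(\mu)\otimes Q,\rp_{S_\al}(\lam)\otimes Q),$$
and substituting the decomposition above splits the right-hand side into the four summands $\Hom_{\overline{\OC}_Q}(\rdel_Q(\nu),\rdel_Q(\nu'))$ with $\nu\in\{\mu,\al\uparrow\mu\}$ and $\nu'\in\{\lam,\al\uparrow\lam\}$. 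Counting the coinciding pairs in the cases $\mu=\lam$, $\mu=\al\uparrow\lam$, $\mu=\al\downarrow\lam$, and otherwise yields the $Q$-dimensions $2,1,1,0$, which are the claimed ranks.

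To finish, the module $\rp_{S_\al}(\mu)=P_{S_\al}^{\J_\mu}(\mu)^{\mathrm{res}}$ is a quotient of the cyclic module $P_{S_\al}^{\J_\mu}(\mu)$ and is therefore itself generated by some weight-$\mu$ vector $v$. Evaluation at $v$ gives an injection
$$\Hom_{\overline{\OC}_{S_\al}}(\rp_{S_\al}(\mu),\rp_{S_\al}(\lam)) \;\hookrightarrow\; \rp_{S_\al}(\lam)_\mu.$$
The restricted Verma flag of $\rp_{S_\al}(\lam)$ from Theorem \ref{thm3} (2), together with Lemma \ref{lem2} (1), shows that every weight space of $\rp_{S_\al}(\lam)$ is finitely generated and free over $S_\al$. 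Since $(\al^\vee)$ is a height-one prime in the regular local ring $\st$, the localization $S_\al$ is a DVR, and hence every submodule of a finitely generated free $S_\al$-module is free. Combined with the rank calculation above, this completes the argument. The key technical input is the decomposition $\rp_{S_\al}(\mu)\otimes_{S_\al} Q\cong \rdel_Q(\mu)\oplus\rdel_Q(\al\uparrow\mu)$, which simultaneously uses base change for restricted Vermas, the genericity of the $\sim_Q^{\mathrm{res}}$-classes, and the restricted block decomposition to split the sequence; once this is in hand, both the rank count and the freeness argument proceed along standard lines.
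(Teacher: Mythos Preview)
Your argument is correct, and the computation of the $Q$-dimension via the splitting $\rp_{S_\al}(\mu)\otimes_{S_\al}Q\cong\rdel_Q(\mu)\oplus\rdel_Q(\al\uparrow\mu)$ is exactly what the paper does. The difference lies in how freeness is obtained. The paper computes, in addition to the $Q$-dimension, the $\K$-dimension of $\Hom_{\overline{\OC}_\K}(\rp_\K(\mu),\rp_\K(\lam))$ by quoting an external reference (\cite{6}), observes that the two dimensions coincide, and then invokes a Nakayama-type criterion (\cite{3}, Lemma 3.7) to conclude that the Hom-module is free of that rank. You instead bypass the residue-field computation entirely: by noting that $\rp_{S_\al}(\mu)$ is cyclic on a weight-$\mu$ vector, you embed the Hom-module into the free weight space $\rp_{S_\al}(\lam)_\mu$ and use that $S_\al$ is a PID. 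Your route is more self-contained (it avoids the citation to \cite{6}), while the paper's route is the standard freeness-via-matching-fibers argument that would continue to work over localizations at higher-height primes where the PID argument is unavailable. One small point worth making explicit in your write-up is why $P_{S_\al}^{\J_\mu}(\mu)$ is cyclic: this follows from Theorem~\ref{thmpr}(4) and weight-wise Nakayama, since $P_\K^{\J_\mu}(\mu)$ has simple head $L_\K(\mu)$.
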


\begin{proof}
$\Hom_{\overline{\OC}_{S_\al}}(\rp_{S_\al}(\mu),\rp_{S_\al}(\lam))$ is a finitely generated $S_{\al}$-module. Let $\K$ be the residue field and $Q$ the quotient field of $S_\al$. By Lemma \ref{lem3} we get
\begin{equation*}
\Hom_{\overline{\OC}_{S_\al}}(\rp_{S_\al}(\mu),\rp_{S_\al}(\lam))\otimes_{S_\al} \K \cong  \Hom_{\overline{\OC}_{\K}}(\rp_{\K}(\mu),\rp_{\K}(\lam))
\end{equation*}
and
\begin{equation*}
\Hom_{\overline{\OC}_{S_\al}}(\rp_{S_\al}(\mu),\rp_{S_\al}(\lam))\otimes_{S_\al} Q \cong  \Hom_{\overline{\OC}_{Q}}(\rp_{Q}(\mu),\rp_{Q}(\lam))
\end{equation*}

But by \cite{6}, chapter 6.2, the dimension of the right hand side of the first equality is given by
\begin{equation*}
\mathrm{dim}_{\K} \Hom_{\overline{\OC}_{\K}}(\rp_{\K}(\mu),\rp_{\K}(\lam))=
\begin{cases}
1, \mathrm{if} \, \mu = \al \downarrow \lam \, \mathrm{or} \, \mu =\al \uparrow \lam \\
2, \mathrm{if} \, \mu = \lam \\
0, \, \mathrm{otherwise} \\
\end{cases}
\end{equation*}
Since we have a decomposition $\rp_{S_\al}(\mu)\otimes_{S_\al} Q \cong \rdel_Q(\al \uparrow \mu) \oplus \rdel_Q(\mu)$ and $\rp_{S_\al}(\lam)\otimes_{S_\al} Q \cong \rdel_Q(\al \uparrow \lam) \oplus \rdel_Q(\lam)$ into simple Verma modules we also get
\begin{equation*}
\mathrm{dim}_{Q} (\Hom_{\overline{\OC}_{S_\al}}(\rp_{S_\al}(\mu),\rp_{S_\al}(\lam))\otimes_{S_\al} Q) =
\begin{cases}
1, \mathrm{if} \, \mu = \al \downarrow \lam \, \mathrm{or} \, \mu =\al \uparrow \lam \\
2, \mathrm{if} \, \mu = \lam \\
0, \, \mathrm{otherwise} \\
\end{cases}
\end{equation*}
But then by \cite{3}, Lemma 3.7 (involving Nakayama's Lemma), we get that $\Hom_{\overline{\OC}_{S_\al}}(\rp_{S_\al}(\mu),\rp_{S_\al}(\lam))$ is a free $S_\al$-module with
\begin{equation*}
\mathrm{rk}_{S_\al} \Hom_{\overline{\OC}_{S_\al}}(\rp_{S_\al}(\mu),\rp_{S_\al}(\lam))=
\begin{cases}
1, \mathrm{if} \, \mu = \al \downarrow \lam \, \mathrm{or} \, \mu =\al \uparrow \lam \\
2, \mathrm{if} \, \mu = \lam \\
0, \, \mathrm{otherwise} \\
\end{cases}
\end{equation*}
\end{proof}

For fixed $\lam \in \Gamma$ we want to give bases for the following $S_\al$-modules:
$$\End_{\overline{\OC}_{S_\al}}(\rp_{S_\al}(\lam))$$
$$\Hom_{\overline{\OC}_{S_\al}}(\rp_{S_\al}(\lam),\rp_{S_\al}(\al \uparrow \lam))$$
$$\Hom_{\overline{\OC}_{S_\al}}(\rp_{S_\al}(\lam),\rp_{S_\al}(\al \downarrow \lam))$$

We follow the notation of \cite{6}. Clearly, we can take the identity in $\End_{\overline{\OC}_{S_\al}}(\rp_{S_\al}(\lam))$ as the first basis element. Over the residue field $\K$ we have a composition
$$\rp_\K(\lam) \twoheadrightarrow \rdel_\K(\lam) \hookrightarrow \rdel_\K(\al \uparrow \lam)$$
We can lift this composition via base change to a map $\rp_{S_\al} (\lam) \rightarrow \rdel_{S_\al}(\al \uparrow \lam)$ which is unequal to $0$. Composing this map with the inclusion $\rdel_{S_\al}(\al \uparrow \lam) \hookrightarrow \rp_{S_\al}(\lam)$ yields an endomorphism $n_\lam : \rp_{S_\al}(\lam) \rightarrow \rp_{S_\al}(\lam)$ which is unequal to $0$ and after applying $\cdot \otimes_{S_\al} \K$ corresponds to the composition
$$n_\lam^\K:\rp_\K(\lam) \twoheadrightarrow \rdel_\K(\lam) \hookrightarrow \rdel_\K(\al \uparrow \lam) \hookrightarrow \rp_\K(\lam)$$
Since the identity and $n_\lam$ are linearly independent, $\{\mathrm{id}, n_\lam\}\subset \End_{\overline{\OC}_{S_\al}}(\rp_{S_\al}(\lam))$ is a basis.\\
Taking the map $\rp_{S_\al}(\lam) \rightarrow \rdel_{S_\al}(\al \uparrow \lam)$ from above and the projectivity of $\rp_{S_\al}(\lam)$ we get a morphism $b_\lam: \rp_{S_\al}(\lam) \rightarrow \rp_{S_\al}(\al \uparrow \lam)$ from the following diagram

\begin{displaymath}
    \xymatrix{
        \rp_{S_\al}(\lam) \ar[dr]\ar@{.>}[r] & \rp_{S_\al}(\al \uparrow \lam) \ar@{->>} [d]\\
              & \rdel_{S_\al}(\al \uparrow \lam) }
\end{displaymath}

This morphism is unequal to $0$ after applying $\cdot\otimes_{S_\al} \K$. We conclude that $b_\lam$ is a basis of $\Hom_{\overline{\OC}_{S_\al}}(\rp_{S_\al}(\lam),\rp_{S_\al}(\al \uparrow \lam))$. \\
Finally, we have the composition 
$$a_\lam : \rp_{S_\al}(\lam) \twoheadrightarrow \rdel_{S_\al}(\lam) \hookrightarrow \rp_{S_\al}(\al \downarrow \lam)$$
which is unequal to $0$ after base change $\cdot\otimes_{S_\al} \K$ with the residue field. Thus it is a basis of $\Hom_{\overline{\OC}_{S_\al}}(\rp_{S_\al}(\lam),\rp_{S_\al}(\al \downarrow \lam))$

\begin{remark}
Since $\al \uparrow \lam \geq \lam$, the endomorphism $n_\lam : \rp_{S_\al}(\lam) \rightarrow \rp_{S_\al}(\lam)$ restricts to an endomorphism $\rdel_{S_\al}(\al\uparrow \lam) \rightarrow \rdel_{S_\al}(\al\uparrow \lam)$ and thus induces an endomorphism on the cokernel $\rdel_{S_\al}(\lam)$ of the inclusion $\rdel_{S_\al}(\al\uparrow \lam) \hookrightarrow \rp_{S_\al}(\lam)$.
\end{remark}

\begin{prop}\label{prop1}
Up to an invertible element of $S_\al$, $n_\lam$ induces the map $\al^\vee \cdot\mathrm{id}$ on $\rdel_{S_\al}(\al \uparrow \lam)$ and the zero map on $\rdel_{S_\al}(\lam)$.
\end{prop}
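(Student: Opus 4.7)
The vanishing of the induced map on $\rdel_{S_\al}(\lam)$ is immediate from the definition $n_\lam=i\circ f$, where $i:\rdel_{S_\al}(\al\uparrow\lam)\hookrightarrow\rp_{S_\al}(\lam)$ is the canonical submodule inclusion from Theorem \ref{thm3}(2): since $\mathrm{im}(n_\lam)\subseteq\rdel_{S_\al}(\al\uparrow\lam)=\ker\bigl(\rp_{S_\al}(\lam)\twoheadrightarrow\rdel_{S_\al}(\lam)\bigr)$, the induced map on the quotient vanishes. For the submodule claim, $n_\lam|_{\rdel_{S_\al}(\al\uparrow\lam)}=i\circ(f|_{\rdel_{S_\al}(\al\uparrow\lam)})$ is an endomorphism of $\rdel_{S_\al}(\al\uparrow\lam)$; because $\End_{\overline{\OC}_{S_\al}}(\rdel_{S_\al}(\al\uparrow\lam))\cong S_\al$, it equals $a\cdot\mathrm{id}$ for some $a\in S_\al$. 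Reducing modulo $\al^\vee$ via Lemma \ref{lem3} identifies $f\otimes_{S_\al}\K$ with the composition $\rp_\K(\lam)\twoheadrightarrow\rdel_\K(\lam)\to\rdel_\K(\al\uparrow\lam)$, whose first factor has kernel $\rdel_\K(\al\uparrow\lam)$; hence $f_\K|_{\rdel_\K(\al\uparrow\lam)}=0$ and $a=u\al^\vee$ for some $u\in S_\al$.

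To establish that $u\in S_\al^\times$ the plan is to analyse the endomorphism algebra $R:=\End_{\overline{\OC}_{S_\al}}(\rp_{S_\al}(\lam))$, which by the preceding proposition is free of rank two over $S_\al$ with basis $\{\mathrm{id},n_\lam\}$. A direct computation $n_\lam^2=i\circ f\circ i\circ f=a\cdot n_\lam$ yields the $S_\al$-algebra presentation $R\cong S_\al[t]/(t(t-a))$. Base change to the quotient field $Q=Q(S_\al)$, combined with the generic decomposition $\rp_Q(\lam)\cong\rdel_Q(\al\uparrow\lam)\oplus\rdel_Q(\lam)$ from Theorem \ref{thm3}(1), embeds $R$ into $\End_{\overline{\OC}_Q}(\rp_Q(\lam))\cong Q\times Q$ with $\mathrm{id}\mapsto(1,1)$ and $n_\lam\mapsto(a,0)$, thereby identifying $R$ with the subring $\{(x,y)\in S_\al\times S_\al\mid a\mid x-y\}$. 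The desired equality $(a)=(\al^\vee)$ is then equivalent to the annihilation of $\Ext^1_{\overline{\OC}_{S_\al}}(\rdel_{S_\al}(\lam),\rdel_{S_\al}(\al\uparrow\lam))$ by $\al^\vee$, because applying $\Hom_{\overline{\OC}_{S_\al}}(-,\rdel_{S_\al}(\al\uparrow\lam))$ to the short exact sequence of Theorem \ref{thm3}(2) and invoking the projectivity of $\rp_{S_\al}(\lam)$ together with the rank computation of the preceding proposition identifies this Ext group with $S_\al/(a)$.

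The principal obstacle is this $\al^\vee$-annihilation, equivalently the existence of a lift of $\al^\vee\cdot\mathrm{id}_{\rdel_{S_\al}(\al\uparrow\lam)}$ to a morphism $\rp_{S_\al}(\lam)\to\rdel_{S_\al}(\al\uparrow\lam)$. The plan is to extract this from the rank-one freeness of $\Hom_{\overline{\OC}_{S_\al}}(\rp_{S_\al}(\lam),\rdel_{S_\al}(\al\uparrow\lam))$ together with a Shapovalov-type analysis: in the subgeneric chain $\wW_\al\cdot\lam$ from Theorem \ref{thm3}(3), the singular vector of weight $\lam$ inside $\rdel_{S_\al}(\al\uparrow\lam)$ vanishes to order exactly one along the hyperplane $\al^\vee=0$. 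Combined with Nakayama's lemma applied to the free $\Hom$ module, this forces $u\in S_\al^\times$ and completes the argument.
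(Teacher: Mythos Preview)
Your argument for the vanishing on $\rdel_{S_\al}(\lam)$ and for the divisibility $\al^\vee\mid a$ is correct and essentially matches the paper. The reformulation via $\Ext^1_{\overline{\OC}_{S_\al}}(\rdel_{S_\al}(\lam),\rdel_{S_\al}(\al\uparrow\lam))\cong S_\al/(a)$ is also valid (it uses $\Hom_{\overline{\OC}_{S_\al}}(\rdel_{S_\al}(\lam),\rdel_{S_\al}(\al\uparrow\lam))=0$, which holds because the target is torsion-free over $S_\al$ while this Hom vanishes over $Q$), but it does not circumvent the hard step: you still must show that $\al^\vee$ annihilates this Ext group, and your last paragraph only offers a plan.

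That plan is where the gap lies. The phrase ``the singular vector of weight $\lam$ inside $\rdel_{S_\al}(\al\uparrow\lam)$'' is problematic precisely because the Hom space just mentioned is zero --- there is no such singular vector over $S_\al$. What is actually required is that the second step of the Jantzen filtration on $\rdel_\K(\al\uparrow\lam)$ vanishes, and this is exactly what the paper isolates and proves as a separate lemma (Lemma~\ref{lemjf}), using the generalized Casimir operator together with the observation that $(\mu-\al\downarrow\mu\mid\tau)_{S_\al}\in(\al^\vee)\setminus(\al^\vee)^2$. With that lemma in hand the paper concludes directly: if $n_\lam$ induced $(\al^\vee)^n\cdot\mathrm{id}$ on the sub-Verma with $n\geq 2$, then $\mathrm{im}(n_\lam)\subset\rdel_{S_\al}(\al\uparrow\lam)^2$, so $n_\lam^\K$ would land in $\rdel_\K(\al\uparrow\lam)^2=0$, contradicting $n_\lam^\K\neq 0$. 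Your ring presentation $R\cong S_\al[t]/(t(t-a))$ and the Ext detour, while correct, are not needed; the substance is the Jantzen-filtration computation, which you have not supplied.
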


Before we prove this proposition we repeat a result of \cite{10} about the Jantzen filtration on $\rdel_\K(\lam)$. \\
The Shapovalov form for Kac-Moody algebras introduced in \cite{8} induces a contravariant form $(\cdot, \cdot)_{S_\al}$ on $\rdel_{S_\al}(\mu)$ for any $\mu \in \Gamma$. We define a filtration on $\rdel_{S_\al}(\mu)$ by setting 
$$\rdel_{S_\al}(\mu)^i :=\{m \in \rdel_{S_\al}(\mu)\,|\,(m, \rdel_{S_\al}(\mu))_{S_\al} \in (\al^\vee)^i\cdot S_\al\}$$

We then get the Jantzen filtration on $\rdel_\K(\mu)$ by 
$$\rdel_\K(\mu)^i:=\mathrm{im}(\rdel_{S_\al}(\mu)^i \hookrightarrow \rdel_{S_\al}(\mu) \twoheadrightarrow \rdel_\K(\mu))$$
where $\rdel_{S_\al}(\mu) \twoheadrightarrow \rdel_\K(\mu)$ is the map induced by $\cdot \otimes_{S_\al} \K$.

\begin{lemma}[\cite{10}, Proposition 5.6]\label{lemjf}
The Jantzen filtration on $\rdel_\K(\mu)$ is given by 
$$\rdel_\K(\mu) \supset L_\K(\al \downarrow \mu)  \supset 0$$
\end{lemma}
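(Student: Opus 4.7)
The plan is to combine the two-composition-factor structure of $\rdel_\K(\mu)$ coming from Theorem \ref{thm3}(2) with a Jantzen-style sum formula for the restricted Shapovalov form. By that theorem the sequence
$$0 \to L_\K(\al\downarrow\mu) \to \rdel_\K(\mu) \to L_\K(\mu) \to 0$$
exhibits $L_\K(\al\downarrow\mu)$ as the unique maximal proper submodule of $\rdel_\K(\mu)$. Contravariance of the Shapovalov form makes each $\rdel_{S_\al}(\mu)^i$ a $\wg_{S_\al}$-submodule, so the image $\rdel_\K(\mu)^i$ is a submodule of $\rdel_\K(\mu)$ and must equal $0$, $L_\K(\al\downarrow\mu)$, or $\rdel_\K(\mu)$. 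Since the highest weight vector has Shapovalov square-norm equal to $1 \not\in \al^\vee S_\al$, the case $\rdel_\K(\mu)^i = \rdel_\K(\mu)$ is ruled out for $i \geq 1$.

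Next I would show that $\rdel_\K(\mu)^1 = L_\K(\al\downarrow\mu)$ by reducing the $S_\al$-valued Shapovalov form modulo $\al^\vee$. The resulting $\K$-valued contravariant form on $\rdel_\K(\mu)$ has radical equal to its maximal proper submodule $L_\K(\al\downarrow\mu)$, so any element lifting a vector of $L_\K(\al\downarrow\mu)$ to $\rdel_{S_\al}(\mu)$ pairs into $\al^\vee S_\al$ against all of $\rdel_{S_\al}(\mu)$. This gives the inclusion $L_\K(\al\downarrow\mu) \subseteq \rdel_\K(\mu)^1$, and combined with the trichotomy above it forces equality.

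To pin down $\rdel_\K(\mu)^i = 0$ for $i \geq 2$, I would compute $\sum_{i\geq 1} \mathrm{ch}\, \rdel_\K(\mu)^i$ via the $\al^\vee$-adic valuation of the determinant of the Shapovalov form on each weight space, in the spirit of Jantzen's original argument. In the subgeneric setting of Lemma \ref{lemsub} we have $R_{S_\al}(\Gamma) = \{\pm\al\}$, so after localizing at $(\al^\vee)$ the only positive real root whose Shapovalov factor is divisible by $\al^\vee$ is $\al$ itself; the imaginary roots contribute nothing after the restriction $(\cdot)^{\mathrm{res}}$, since $Z_n$ acts trivially on restricted modules for $n\neq 0$. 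The sum formula then collapses to a single term whose character equals $\mathrm{ch}\, L_\K(\al\downarrow\mu)$, which combined with the equality $\rdel_\K(\mu)^1 = L_\K(\al\downarrow\mu)$ forces $\rdel_\K(\mu)^i = 0$ for all $i \geq 2$.

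The main obstacle is the restricted sum formula itself. In the classical affine setting one gets infinitely many contributions from the real roots $\al + n\delta$ and from the imaginary roots $n\delta$; the key technical step is to verify that the restricted quotient $\rdel_\K(\mu)$ and the localization at $\al^\vee$ together collapse this infinite sum to the single term coming from the root $\al$. This cancellation is the substance of the argument in \cite{10}, and once it is in hand the lemma follows immediately from the submodule-trichotomy established above.
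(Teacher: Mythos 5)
Your first step (identifying $\rdel_\K(\mu)^1$ with the maximal submodule $L_\K(\al\downarrow\mu)$ via the radical of the contravariant form and Theorem \ref{thm3}(2)) is fine and coincides with the paper's opening observation. The genuine gap is in the decisive step $\rdel_\K(\mu)^i=0$ for $i\geq 2$: you outsource it to a ``restricted Jantzen sum formula'' that you do not prove, and which is in fact the main technical content of \cite{10} --- the very source of the lemma. Moreover, the heuristics you offer for why the sum collapses are not correct at the critical level. Since $\lam(K)=\mathrm{crit}$ means $(\lam+\rho|\delta)=0$, the Shapovalov-type condition $2(\lam+\rho+\tau|\al+n\delta)\in\Z(\al|\al)$ is independent of $n$, so after localizing at $(\al^\vee)$ it is \emph{not} true that only the root $\al$ contributes a factor divisible by $\al^\vee$: all real roots $\pm\al+n\delta$ contribute simultaneously (this is exactly why $\Gamma$ is the full $\wW_\al$-orbit), and the claim that the imaginary-root contributions ``disappear after restriction'' is precisely the cancellation that has to be established, not assumed; it is also not obvious that the classical Shapovalov determinant controls the form on the quotient $\rdel_{S_\al}(\mu)$ at all. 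Finally, even granting a sum formula, its right-hand side must be pinned down exactly (a na\"ive analogue would produce $\mathrm{ch}\,\rdel_\K(\al\downarrow\mu)=\mathrm{ch}\,L_\K(\al\downarrow\mu)+\mathrm{ch}\,L_\K(\al\downarrow^2\mu)$, which is too big to fit inside $\rdel_\K(\mu)$), so the ``single term of character $\mathrm{ch}\,L_\K(\al\downarrow\mu)$'' cannot simply be asserted.

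The paper avoids the sum formula entirely and argues directly, following Jantzen's classical trick: assume the highest weight vector $m$ of $L_\K(\al\downarrow\mu)$ lies in $\rdel_\K(\mu)^2$, lift it to $m'\in\rdel_{S_\al}(\mu)^2$, and observe that for weights $\nu>\al\downarrow\mu$ one has $\rdel_{S_\al}(\mu)^2_\nu\subset(\al^\vee)^2\rdel_{S_\al}(\mu)_\nu$. Comparing the two expressions for $Cm'$, where $C$ is the generalized Casimir operator (acting on the whole module by the scalar attached to $\mu$, and on the weight-$(\al\downarrow\mu)$ component by the scalar attached to $\al\downarrow\mu$, up to higher-weight terms), gives $(\mu-\al\downarrow\mu|\tau)\,m'\in(\al^\vee)^2\rdel_{S_\al}(\mu)$; since $\mu-\al\downarrow\mu$ is a positive multiple of $\al$ or of $-\al+\delta$ and $(\delta|\tau)_{S_\al}=0$, this scalar is $k\al^\vee$ with $k\neq 0$, and specializing $\al^\vee\mapsto 0$ forces $km=0$, a contradiction. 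If you want to keep your sum-formula route you would have to actually prove the restricted determinant/sum formula (the substance of \cite{10}); otherwise you should replace that step with a self-contained argument such as the Casimir computation above.
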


\begin{proof}
We give a proof which is adapted from \cite{9}, chapter 5.14, and slightly different to the one given in \cite{10}.\\
Since, by construction, $\rdel_\K(\mu)^1$ coincides with the maximal submodule of $\rdel_\K(\mu)$, we get, by Theorem \ref{thm3}, that $\rdel_\K(\mu)^1\cong L_\K(\al \downarrow\mu)$. We have to prove $\rdel_\K(\mu)^2=0$. Let $m \in \rdel_\K(\mu)^1_\mu$ be a generator of highest weight $\mu$. We just have to prove $m \notin \rdel_\K(\mu)^2$. Let us assume $m \in \rdel_\K(\mu)^2$. Thus, there is an element $m' \in \rdel_{S_\al}(\mu)^2$ such that $m' \mapsto m$ under specialization $\rdel_{S_\al}(\mu) \twoheadrightarrow \rdel_\K(\mu)$. Since for $\nu > \al \downarrow \mu$ we have $\rdel_\K(\mu)_\nu^1=0$, we conclude
$$\rdel_{S_\al}(\mu)_\nu^1 \subset (\al^\vee)\cdot \rdel_{S_\al}(\mu)_\nu$$
and
$$\rdel_{S_\al}(\mu)_\nu^2 \subset (\al^\vee)^2\cdot \rdel_{S_\al}(\mu)_\nu$$

The generalized Casimir operator $C$ can be split into a sum $C=C_1 + C_2$, where $C_1(\rdel_{S_\al}(\mu)_\nu) \subset \bigoplus_{\eta>\nu} \rdel_{S_\al}(\mu)_\eta$, $C_2(\rdel_{S_\al}(\mu)_\nu)\subset \rdel_{S_\al}(\mu)_\nu$ and $C_2$ acts on $\rdel_{S_\al}(\mu)_\nu$ by multiplication with $(\nu+\tau+\rho|\nu+\tau+\rho)_{S_\al} - (\rho|\rho)_{S_\al} \in S_\al$. Since $C$ commutes with the $\wg$-action, we get that $C$ acts on $\rdel_{S_\al}(\mu)$ by multiplication with $(\mu+\tau+\rho|\mu+\tau+\rho)_{S_\al} - (\rho|\rho)_{S_\al}$. Thus, applying $C$ to $m'$ yields
$$Cm'= ((\mu+\tau+\rho|\mu+\tau+\rho)_{S_\al} - (\rho|\rho)_{S_\al})m'$$
on the one hand, but also
$$Cm'\in ((\al \downarrow \mu+\tau+\rho|\al \downarrow \mu+\tau+\rho)_{S_\al} - (\rho|\rho)_{S_\al})m' + (\al^\vee)^2\cdot\rdel_{S_\al}(\mu).$$
Therefore, by $\wW$-invariance of $(\cdot|\cdot)_{S_\al}$ and a little calculation we get
$$(\mu -\al \downarrow \mu|\tau)m'\in (\al^\vee)^2\cdot\rdel_{S_\al}(\mu).$$
Since $\mu -\al \downarrow \mu$ is either equal to $n \al$ or $n(-\al+\delta)$ for $n>0$ an integer and since $(\delta|\tau)_{S_\al}=0$, we get $(\mu-\al \downarrow \mu|\tau)_{S_\al}=k \al^\vee$ with $k \in \C\backslash \{0\}$. But then specializing $\al^\vee \mapsto 0$ yields $km=0$ which is a contradiction. Thus $m \notin \rdel_\K(\mu)^2$.
\end{proof}

With this lemma, we can now prove Proposition \ref{prop1}

\begin{proof}[Proof of Proposition \ref{prop1}]
Over the residue field $\K$, we get the following diagram of short exact sequences in the horizontals

\begin{displaymath}
    \xymatrix{
        \rdel_{\K}(\al \uparrow \lam) \ar@{^{(}->}[r] \ar[ddd]_{y \cdot \mathrm{id}} & \rp_\K(\lam) \ar@{->>}[r] \ar@{->>}[d]& \rdel_{\K}(\lam) \ar[ddd]^{{x \cdot \mathrm{id}}} \\
                                     & \rdel_{\K}(\lam) \ar@{^{(}->}[d]          & \\
                                     & \rdel_{\K}(\al \uparrow \lam) \ar@{^{(}->}[d]          & \\
       \rdel_{\K}(\al \uparrow \lam)     \ar@{^{(}->}[r]       & \rp_\K(\lam)    \ar@{->>}[r]          & \rdel_{\K}(\lam)       }
\end{displaymath}
where the composition $n_\lam^\K$ in the middle is induced by $n_\lam$ and $x,y\in \K$.
Since the composition $\rdel_{\K}(\al \uparrow \lam) \hookrightarrow \rp_\K(\lam) \twoheadrightarrow \rdel_{\K}(\lam)$ is zero, we get that both scalars, $x$ and $y$, are zero.\\
Over $S_\al$ the composition 
$$\rp_{S_\al} (\lam) \rightarrow \rdel_{S_\al}(\al \uparrow \lam) \hookrightarrow \rp_{S_\al} (\lam) \twoheadrightarrow \rdel_{S_\al}(\lam)$$
is zero, so $n_\lam$ induces the zero map on $\rdel_{S_\al}(\lam)$. But the composition $\rdel_{S_\al}(\al \uparrow \lam) \hookrightarrow \rp_{S_\al} (\lam) \rightarrow \rdel_{S_\al}(\al \uparrow \lam)$ is unequal to zero since otherwise we would have a factorization over the cokernel, in formulas
\begin{displaymath}
    \xymatrix{
        \rdel_{S_\al}(\al \uparrow \lam) \ar@{^{(}->}[r]  & \rp_{S_\al} (\lam) \ar@{->>}[dr] \ar[r]  & \rdel_{S_\al}(\al \uparrow \lam)\\
      && \rdel_{S_\al}(\lam) \ar@{.>}[u]_{\exists !}       }
\end{displaymath}
But $\rdel_{S_\al}(\lam) \rightarrow \rdel_{S_\al}(\al \uparrow \lam)$ is the zero map, because it is zero after applying $\cdot \otimes_{S_\al} Q$, while $\rp_{S_\al} (\lam) \rightarrow \rdel_{S_\al}(\al \uparrow \lam)$ is unequal to zero.\\
Now, after multiplying $n_\lam$ with an appropriate invertible element of $S_\al$, $n_\lam$ induces $(\al^\vee)^n \cdot \mathrm{id}$ on $\rdel_{S_\al}(\al \uparrow \lam)$. We want to show $n=1$. \\
Assume $n\geq 2$. Then $\mathrm{im}(n_\lam) \subset \rdel_{S_\al}(\al \uparrow \lam)^2\subset \rp_{S_\al}(\lam)$ by definition of the Jantzen filtration. But after base change $\cdot \otimes_{S_\al} \K$, we have $\mathrm{im}(n_\lam^\K) \subset \rdel_{S_\al}(\al \uparrow \lam)^2 \otimes_{S_\al} \K$. But $\rdel_{S_\al}(\al \uparrow \lam)^2 \otimes_{S_\al} \K$ is the second step of the Jantzen filtration on $\rdel_{\K}(\al \uparrow \lam)$ which is zero by Lemma \ref{lemjf}. But this contradicts $n_\lam^\K \neq 0$ and we conclude $n=1$.
\end{proof}

Let us fix a short exact sequence $\rdel_{S_\al}(\al \uparrow \lam) \hookrightarrow \rp_{S_\al}(\lam) \twoheadrightarrow \rdel_{S_\al}(\lam)$ and let $n_\lam : \rp_{S_\al}(\lam) \rightarrow \rp_{S_\al}(\lam)$ be the map from above normalized, s.t. it induces $\al^\vee \cdot \mathrm{id}$ on $\rdel_{S_\al}(\al \uparrow \lam)$ and $0$ on $\rdel_{S_\al}(\lam)$.

\begin{lemma}
\begin{enumerate}
\item We have $n_\lam \circ a_{\al \uparrow \lam} = a_{\al \uparrow \lam} \circ (\al^\vee \cdot \mathrm{id}-n_{\al \uparrow \lam})$
\item We have $(\al^\vee \cdot \mathrm{id}-n_{\al \uparrow \lam})\circ b_\lam = b_\lam \circ n_\lam$
\end{enumerate}
\end{lemma}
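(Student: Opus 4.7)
The plan is to exploit the preceding Proposition: the relevant $\Hom$ modules are free of rank one over $S_\al$, so each identity amounts to matching a single scalar. The key input is Proposition~\ref{prop1}, which says $n_\mu$ restricts to multiplication by $\al^\vee$ on the sub-Verma $\rdel_{S_\al}(\al\uparrow\mu)\subset \rp_{S_\al}(\mu)$ and induces zero on the quotient $\rdel_{S_\al}(\mu)$.

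For (1), both sides lie in $\Hom_{\overline{\OC}_{S_\al}}(\rp_{S_\al}(\al\uparrow\lam),\rp_{S_\al}(\lam))$, a free $S_\al$-module of rank one with basis $a_{\al\uparrow\lam}$, and I will show each side equals $\al^\vee\cdot a_{\al\uparrow\lam}$. Since $a_{\al\uparrow\lam}$ factors through the sub-Verma $\rdel_{S_\al}(\al\uparrow\lam)\subset\rp_{S_\al}(\lam)$, on which $n_\lam$ acts by $\al^\vee$, the left-hand side equals $\al^\vee\cdot a_{\al\uparrow\lam}$. For the right-hand side, $a_{\al\uparrow\lam}\circ(\al^\vee\cdot\mathrm{id})$ is evidently $\al^\vee\cdot a_{\al\uparrow\lam}$, and $a_{\al\uparrow\lam}\circ n_{\al\uparrow\lam}=0$ because $n_{\al\uparrow\lam}$ has image in the sub-Verma $\rdel_{S_\al}(\al\uparrow^2\lam)\subset\rp_{S_\al}(\al\uparrow\lam)$, which is precisely the kernel of the surjection $\rp_{S_\al}(\al\uparrow\lam)\twoheadrightarrow\rdel_{S_\al}(\al\uparrow\lam)$ through which $a_{\al\uparrow\lam}$ factors.

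For (2), both sides lie in $\Hom_{\overline{\OC}_{S_\al}}(\rp_{S_\al}(\lam),\rp_{S_\al}(\al\uparrow\lam))$, free of rank one with basis $b_\lam$. I postcompose with the projection $q:\rp_{S_\al}(\al\uparrow\lam)\twoheadrightarrow\rdel_{S_\al}(\al\uparrow\lam)$; the induced map $q_*$ on $\Hom$ sends $b_\lam$ to the defining lift $\tilde b_\lam:\rp_{S_\al}(\lam)\to\rdel_{S_\al}(\al\uparrow\lam)$ (the map from which both $b_\lam$ and $n_\lam=\iota\circ\tilde b_\lam$ were constructed, where $\iota:\rdel_{S_\al}(\al\uparrow\lam)\hookrightarrow\rp_{S_\al}(\lam)$ is the sub-Verma inclusion). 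For the left-hand side, $q\circ n_{\al\uparrow\lam}=0$ because the image of $n_{\al\uparrow\lam}$ is $\ker q$, so $q_*(\text{LHS})=\al^\vee\cdot\tilde b_\lam$. For the right-hand side, $q_*(\text{RHS})=\tilde b_\lam\circ n_\lam=\tilde b_\lam\circ\iota\circ\tilde b_\lam$; but $n_\lam\circ\iota=\iota\circ(\al^\vee\cdot\mathrm{id})$ by Proposition~\ref{prop1}, and the injectivity of $\iota$ then forces $\tilde b_\lam\circ\iota=\al^\vee\cdot\mathrm{id}$, giving $q_*(\text{RHS})=\al^\vee\cdot\tilde b_\lam$ as well.

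The main obstacle is verifying that $q_*$ is injective, so that the equality after postcomposing with $q$ really implies the equality in question. I expect this to go along the lines of the preceding Proposition: $\Hom_{\overline{\OC}_{S_\al}}(\rp_{S_\al}(\lam),\rdel_{S_\al}(\al\uparrow\lam))$ is free of rank one (the computations over the residue field $\K$ and the quotient field $Q$ both yield one-dimensional $\Hom$ spaces, and \cite{3}, Lemma 3.7, upgrades this via Nakayama to freeness), and $\tilde b_\lam$ has a nonzero reduction modulo $\al^\vee$ by construction, so it generates this free module; hence $q_*$ is an isomorphism between free rank-one $S_\al$-modules.
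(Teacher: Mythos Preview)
Your argument is correct. For part (1) you spell out exactly what the paper means by ``clear by definition of the map $n_\lam$ and the effect it has on restricted Verma modules.'' For part (2) you take a genuinely different route: the paper simply tensors the commutativity question up to the quotient field $Q$, where both $\rp$'s split as direct sums of simple Verma modules and $n_\lam\otimes Q$, $(\al^\vee\cdot\mathrm{id}-n_{\al\uparrow\lam})\otimes Q$ become the same diagonal matrix $\left(\begin{smallmatrix}0&0\\0&\al^\vee\end{smallmatrix}\right)$; commutativity over $Q$ then descends to $S_\al$ because $\rp_{S_\al}(\al\uparrow\lam)$ is torsion-free. Your approach instead postcomposes with the quotient map $q$ to $\rdel_{S_\al}(\al\uparrow\lam)$ and checks that $q_*$ is an isomorphism of free rank-one $\Hom$-modules. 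The paper's method is quicker and uses only the coarse fact that Verma-filtered modules are torsion-free; yours is more explicit and stays entirely within the framework of the preceding Proposition (rank comparison over $\K$ and $Q$ plus Nakayama), which makes the role of Proposition~\ref{prop1} in the identity $\tilde b_\lam\circ\iota=\al^\vee\cdot\mathrm{id}$ transparent.
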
 

\begin{proof}
\begin{enumerate}
\item This part of the lemma is clear by definition of the map $n_\lam$ and the effect it has on restricted Verma modules.
\item Applying $\cdot \otimes_{S_\al} Q$ to the diagram
\begin{displaymath}
    \xymatrix{
        \rp_{S_\al}(\lam) \ar[r]^{b_\lam} \ar[d]_{n_\lam} & \rp_{S_\al}(\al \uparrow \lam) \ar[d]^{\al^\vee \cdot \mathrm{id} -n_{\al \uparrow \lam}} \\
       \rp_{S_\al}(\lam) \ar[r]_{b_\lam}       & \rp_{S_\al}(\al \uparrow \lam) }
\end{displaymath}
identifies with the diagram
\begin{displaymath}
    \xymatrix{
        \rdel_{Q}(\lam) \oplus \rdel_{Q}(\al \uparrow \lam)\ar[r] \ar[d]_{g_1} & \rdel_Q(\al \uparrow^2 \lam) \oplus \rdel_Q(\al \uparrow \lam) \ar[d]^{g_2} \\
       \rdel_Q(\lam) \oplus \rdel_Q(\al \uparrow \lam) \ar[r]      & \rdel_Q(\al \uparrow^2 \lam) \oplus \rdel_Q(\al \uparrow \lam) }
\end{displaymath}
where $g_1, g_2$ are both given by the matrix
$$ \begin{pmatrix} 0 & 0\\ 0 & (\al^\vee \cdot) \end{pmatrix}  $$
But this diagram certainly commutes. Then the diagram above over $S_\al$ commutes as well.
\end{enumerate}
\end{proof}

By the description of the center in Remark \ref{rmk2}, and by what we have discovered above, the image of the inclusion
$$\mathcal{Z}_{S_\al}(\Gamma) \hookrightarrow \prod_{\substack{\lam \in \Gamma}} \End_{\overline{\OC}_{S_\al}}(\rp_{S_\al}(\lam))$$
is generated by the tuples $(\mathrm{id}_{\rp_{S_\al}(\mu)})_{\mu \in \Gamma}$ and $\{(\delta_\lam^\mu)_{\mu \in \Gamma}\}_{\lam \in \Gamma}$ where $$\delta_\lam^\mu= \begin{cases} n_\lam, \,\mathrm{if} \, \mu=\lam \\ \al^\vee \cdot \mathrm{id} - n_{\al \uparrow \lam}, \,\mathrm{if} \, \mu=\al \uparrow \lam \\ 0, \,\mathrm{otherwise} \end{cases}.$$ 
But the images of these generators under the map
$$\phi: \prod_{\substack{\mu \in \Gamma}} \End_{\overline{\OC}_{S_\al}}(\rp_{S_\al}(\mu)) \rightarrow \prod_{\substack{\mu \in \Gamma}} \End_{\overline{\OC}_{S_\al}}(\rdel_{S_\al}(\mu)) \cong \prod_{\substack{\mu \in \Gamma}} S_\al$$
are $\phi((\mathrm{id}_{\rp_{S_\al}(\mu)})_{\mu \in \Gamma}) = (1)_{\mu \in \Gamma}$ and $\phi((\delta_\lam^\mu)_{\mu \in \Gamma})= (\kappa_\lam^\mu)_{\mu \in \Gamma}$ with\\
$\kappa_\lam^\mu= \begin{cases} \al^\vee, \mathrm{if}\, \mu = \al \uparrow \lam \\ 0, \mathrm{else} \end{cases}$.
As a conclusion we have 
 
\begin{prop}\label{propcent}
$$\mathcal{Z}_{S_\al}(\Gamma) \cong \left \{(z_\mu)_{\mu \in \Gamma} \in \prod_{\mu \in \Gamma} S_\al \middle | z_\mu \equiv z_{\al \uparrow \mu} \,(\mathrm{mod} \, \al^\vee) \right \}$$
\end{prop}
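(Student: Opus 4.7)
The plan is to identify the image of the composite injection
$$\mathcal{Z}_{S_\al}(\Gamma) \hookrightarrow \prod_{\mu \in \Gamma} \End_{\overline{\OC}_{S_\al}}(\rp_{S_\al}(\mu)) \xrightarrow{\phi} \prod_{\mu \in \Gamma} S_\al,$$
whose injectivity is the content of the lemma preceding the proposition. By Remark \ref{rmk2}, the image of the first arrow consists of those tuples $(f_\mu)$ satisfying $f_\mu \circ g = g \circ f_\lam$ for every $g \in \Hom_{\overline{\OC}_{S_\al}}(\rp_{S_\al}(\lam), \rp_{S_\al}(\mu))$. By the rank table of the preceding proposition this Hom space vanishes unless $\mu \in \{\al \downarrow \lam, \lam, \al \uparrow \lam\}$, and the diagonal case $\mu = \lam$ is automatic because $\End_{\overline{\OC}_{S_\al}}(\rp_{S_\al}(\lam)) = S_\al \cdot \mathrm{id} \oplus S_\al \cdot n_\lam$ is commutative, so only the constraints involving $a_\lam$ and $b_\lam$ persist.

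Expanding $f_\mu = x_\mu \cdot \mathrm{id} + y_\mu \cdot n_\mu$ in the basis of each endomorphism ring, I will compute the four mixed compositions $a_\lam \circ n_\lam$, $n_{\al \downarrow \lam} \circ a_\lam$, $b_\lam \circ n_\lam$, $n_{\al \uparrow \lam} \circ b_\lam$; each lies in a rank-one Hom space and is thus a scalar multiple of $a_\lam$ or $b_\lam$. The vanishing $a_\lam \circ n_\lam = 0$ is immediate from the defining factorization $a_\lam : \rp_{S_\al}(\lam) \twoheadrightarrow \rdel_{S_\al}(\lam) \hookrightarrow \rp_{S_\al}(\al \downarrow \lam)$ together with $\mathrm{im}(n_\lam) \subset \rdel_{S_\al}(\al \uparrow \lam) = \ker(\rp_{S_\al}(\lam) \twoheadrightarrow \rdel_{S_\al}(\lam))$. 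The identity $b_\lam \circ n_\lam = \al^\vee \cdot b_\lam$ follows by composing with the surjection $\rp_{S_\al}(\al \uparrow \lam) \twoheadrightarrow \rdel_{S_\al}(\al \uparrow \lam)$, using both the factorization of $b_\lam$ as a lift of the defining map of $n_\lam$ and that $n_\lam$ acts as $\al^\vee \cdot \mathrm{id}$ on $\rdel_{S_\al}(\al \uparrow \lam)$. The commutation lemma above then forces $n_{\al \uparrow \lam} \circ b_\lam = 0$ and $n_{\al \downarrow \lam} \circ a_\lam = \al^\vee \cdot a_\lam$. Plugging these into the centrality conditions, both collapse into the single family of relations
$$x_{\al \uparrow \mu} = x_\mu + y_\mu \cdot \al^\vee \qquad (\mu \in \Gamma).$$

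Under $\phi$ the endomorphism $x_\mu \cdot \mathrm{id} + y_\mu \cdot n_\mu$ restricts on $\rdel_{S_\al}(\al \uparrow \mu)$ to the scalar $z_\mu := x_\mu + y_\mu \al^\vee$, since $n_\mu$ acts there as $\al^\vee \cdot \mathrm{id}$. The relation above then yields $z_\mu - z_{\al \uparrow \mu} = -\al^\vee \cdot y_{\al \uparrow \mu} \in \al^\vee S_\al$, so the image is contained in the claimed congruence subset. Conversely, given any $(z_\mu)$ with $z_\mu \equiv z_{\al \uparrow \mu} \pmod{\al^\vee}$, the prescription $x_\mu := z_{\al \downarrow \mu}$ and $y_\mu := (z_\mu - z_{\al \downarrow \mu})/\al^\vee$ lies in $S_\al$ by hypothesis and satisfies both $x_\mu + y_\mu \al^\vee = z_\mu$ and $x_{\al \uparrow \mu} - x_\mu = y_\mu \al^\vee$, providing the required preimage.

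The main obstacle is the clean derivation of the four mixed compositions $a_\lam n_\lam = 0$, $b_\lam n_\lam = \al^\vee b_\lam$ and their $\al \uparrow$-counterparts; these require pairing the rank-one Hom assertions from the preceding proposition with the explicit factorizations of $a_\lam, b_\lam, n_\lam$ through the short exact sequence of Theorem \ref{thm3}(2). Once these identities are in hand, the remaining linear-algebra unwinding is routine.
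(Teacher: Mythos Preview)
Your proof is correct and follows essentially the same route as the paper. The paper packages the same data by exhibiting explicit $S_\al$-module generators of the image of $\mathcal{Z}_{S_\al}(\Gamma)$ in $\prod_\mu \End_{\overline{\OC}_{S_\al}}(\rp_{S_\al}(\mu))$, namely $(\mathrm{id})_\mu$ and the tuples $(\delta_\lam^\mu)_\mu$ with $\delta_\lam^\lam=n_\lam$, $\delta_\lam^{\al\uparrow\lam}=\al^\vee\cdot\mathrm{id}-n_{\al\uparrow\lam}$, and $\delta_\lam^\mu=0$ otherwise, and then pushes these generators through $\phi$; you instead parametrize the image by coordinates $(x_\mu,y_\mu)$ subject to the recursion $x_{\al\uparrow\mu}=x_\mu+y_\mu\al^\vee$ and read off the congruences directly. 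Both presentations rest on exactly the same ingredients (Remark~\ref{rmk2}, the rank table, Proposition~\ref{prop1}, and the commutation lemma), and your explicit inverse $(x_\mu,y_\mu)=(z_{\al\downarrow\mu},(z_\mu-z_{\al\downarrow\mu})/\al^\vee)$ makes the surjectivity step slightly more transparent than in the paper.
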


\subsection{The general case}

In this chapter we want to carry together our results to prove the main theorem. Let $\Lam$ be an equivalence class under $\sim_{\st}^\mathrm{res}$.

\begin{thm}
$$\mathcal{Z}_{\st}(\Lam) \cong \left \{(z_\mu)_{\mu \in \Lam} \in \prod_{\substack{\mu \in \Lam}} \st \middle | z_\mu \equiv z_{\al \downarrow \mu} \, (\mathrm{mod} \, \al^\vee) \, \forall \al \in R_{\st}(\Lam) \right \}$$
\end{thm}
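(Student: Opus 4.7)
The strategy is to reduce to the subgeneric situation of Proposition \ref{propcent} by localizing at height-one primes of $\st$. Since $\st$ is a regular local ring, hence a normal Noetherian domain, every reflexive $\st$-submodule $N$ of the free module $\prod_{\mu \in \Lam} \st$ satisfies $N = \bigcap_{\p} N_\p$ inside $\prod_\mu Q(\st)$, with $\p$ ranging over height-one primes of $\st$. The right-hand side $M$ of the theorem is visibly such a reflexive module, being defined by finitely many congruence conditions between pairs of coordinates. The plan is therefore to show $\mathcal{Z}_\st(\Lam)_\p = M_\p$ at every height-one $\p$ and to glue.

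To compute the localization of the center, I would first upgrade the base-change Lemma \ref{lem3} from endomorphism rings to the center itself. Applying $\cdot \otimes_\st S_\p$ to the injection
$$\mathcal{Z}_\st(\Lam, \J) \hookrightarrow \prod_{\mu \in \Lam \cap \J} \End_{\overline{\OC}_\st}(\rp_\st^\J(\mu))$$
of Remark \ref{rmk2} and invoking Lemma \ref{lem3} componentwise identifies the localized source with $\prod_\Gamma \mathcal{Z}_{S_\p}(\Gamma, \J)$, where $\Gamma$ runs over the $\sim_{S_\p}^\mathrm{res}$-equivalence classes covering $\Lam$ (that $\sim_{S_\p}^\mathrm{res}$ refines $\sim_\st^\mathrm{res}$ on $\Lam$ follows from base change for restricted projective covers, Theorem \ref{thmpr}(4) combined with Lemma \ref{lem2}). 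Passing to the inverse limit over $\J$, and recalling that $\mathcal{Z}_\st(\Lam)$ is itself defined as such a limit, yields $\mathcal{Z}_\st(\Lam)_\p \cong \prod_\Gamma \mathcal{Z}_{S_\p}(\Gamma)$.

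Lemma \ref{lemsub} then splits height-one primes into two cases. If every $\al^\vee$ with $\al \in R_\st(\Lam)$ is a unit in $S_\p$, each $\Gamma \subset \Lam$ is a singleton, Theorem \ref{thm3}(1) gives $\mathcal{Z}_{S_\p}(\Gamma) = S_\p$, and hence $\mathcal{Z}_\st(\Lam)_\p = \prod_\mu S_\p = M_\p$, since the defining congruences of $M$ are vacuous in $S_\p$. If instead $\p = (\al^\vee)$ for the unique integral $\al \in R_\st(\Lam)$ with $\al^\vee \in \p$, then each non-generic $\Gamma \subset \Lam$ is the orbit $\{\dots, \al \downarrow \lam, \lam, \al \uparrow \lam, \dots\}$ of Theorem \ref{thm3}(3); Proposition \ref{propcent} identifies $\mathcal{Z}_{S_\p}(\Gamma)$ with those tuples satisfying $z_\mu \equiv z_{\al \uparrow \mu} \pmod{\al^\vee}$, and reindexing by $\nu = \al \uparrow \mu$ rewrites this as $z_{\al \downarrow \nu} \equiv z_\nu \pmod{\al^\vee}$, which is exactly the only non-trivial component of $M_\p$. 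Thus $\mathcal{Z}_\st(\Lam)_\p = M_\p$ in either case.

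The main obstacle I expect is the upgrade from $\Hom$-base change to center-base change, which requires commuting the inverse limit over $\J$ with the localization functor and checking that the "compatibility with all morphisms" description of the image in Remark \ref{rmk2} is itself stable under localization. For fixed $\J$ these are finite-generation statements reducing to Lemma \ref{lem3}, but passing to the limit requires some care, since a priori one only has a map $\mathcal{Z}_\st(\Lam)_\p \to \varprojlim_\J \mathcal{Z}_{S_\p}(\Lam, \J)$ and must argue this is an isomorphism. Once these ingredients are in place, the agreement of localizations at every height-one prime together with the reflexivity of both $\mathcal{Z}_\st(\Lam)$ and $M$ yields $\mathcal{Z}_\st(\Lam) = M$ as asserted.
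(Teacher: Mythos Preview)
Your approach is essentially the same as the paper's: localize at height-one primes of $\st$, use Lemma \ref{lemsub} to reduce to the generic and subgeneric cases, invoke Proposition \ref{propcent} for the latter, and recover $\mathcal{Z}_{\st}(\Lam)$ as the intersection $\bigcap_\p \mathcal{Z}_{S_\p}(\Lam)$ inside $\prod_\mu Q$. The paper's proof is terser and simply asserts the base-change isomorphism $\mathcal{Z}_{\st}(\Lam)\otimes_{\st}S_\p\cong\mathcal{Z}_{S_\p}(\Lam)$ and the intersection formula without further comment; the concerns you flag about commuting the inverse limit over $\J$ with localization are exactly the points the paper leaves implicit. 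One small slip: $\Lam$ is infinite, so $M$ is not cut out by \emph{finitely} many congruences, but the intersection property you need still follows coordinatewise from $\st=\bigcap_\p S_\p$ (as $\st$ is a Krull domain), so this does not affect the argument.
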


\begin{proof}
For $\p \subset \st$ a prime ideal of height one and for $Q=\mathrm{Quot}(\st)$, we have a base change map
$$\mathcal{Z}_{\st}(\Lam) \hookrightarrow  \mathcal{Z}_{\st}(\Lam) \otimes_{\st} S_{\p} \cong \mathcal{Z}_{S_\p}(\Lam) \subset \mathcal{Z}_Q(\Lam)\cong \prod_{\mu \in \Lam}Q.$$
We also have 
$$\mathcal{Z}_{\st}(\Lam) = \bigcap_{\substack{\p \in \mathfrak{P}}} \mathcal{Z}_{S_\p}(\Lam)$$
in the $Q$-vector space $\prod_{\substack{\mu \in \Lam}} Q$. Here, $\mathfrak{P}$ denotes the set of all prime ideals of $\st$ of height one. If $\al^\vee \notin \p$ for all $\al \in R_{\st}(\Lam)$, then all Verma modules are projective and we have $\mathcal{Z}_{S_\p}(\Lam)\cong \prod_{\mu \in \Lam}S_\p$.\\
If $\p$ is generated by $\al^\vee$ for $\al \in R_{\st}(\Lam)$, $\mathcal{Z}_{S_\p}(\Lam)$ decomposes into the direct sum of modules of the form described in Proposition \ref{propcent} and in modules of the form $\prod_{\substack{\mu \in \Gamma}} S_\p$, where $\Gamma \subset \Lam$ is a generic equivalence class under $\sim_{S_\p}^\mathrm{res}$, i.e., $\Gamma$ only contains one element. This proves the claim.
\end{proof}

\section{Acknowledgements}
I would like to thank my supervisor Peter Fiebig for many inspiring discussions.

\bibliographystyle{amsplain}

\end{document}